\newtheorem{theorem}{Theorem}
\newtheorem{lemma}{Lemma}
\newcommand{\p}{\Bbb{P}}
\newcommand{\e}{\Bbb{E}}
\newcommand{\R}{\mathbb{R}}
\newcommand{\ud}{\mathrm{d}}
\title{
\textbf{L\'evy insurance risk processes with parisian type severity of debt.}}
\author{\textbf{ J.C. Pardo\footnote{Centro de Investigaci\'on en Matem\'aticas A.C. Calle Jalisco s/n. C.P. 36240, {\sc Guanajuato, Mexico.}  Email: jcpardo@cimat.mx.}\,\,, J.L. P\'erez\footnote{Department of Probability and Statistics, IIMAS, UNAM. , C.P. 04510 {\sc Mexico, D.F., Mexico.} Email: garmendia@sigma.iimas.unam.mx}\,\,, V.M. Rivero\footnote{Centro de Investigaci\'on en Matem\'aticas A.C. Calle Jalisco s/n. C.P. 36240, {\sc Guanajuato, Mexico.}  Email: rivero@cimat.mx.}\, .}}
\date{\footnotesize This version: \today}
\begin{document}

\maketitle

\begin{abstract}
\bigskip

In this article, we introduce  a new definition of bankruptcy  for a spectrally negative L\'evy insurance risk process. More precisely, we study the  Gerber-Shiu distribution for a ruin model where at each time the surplus  goes negative, an independent negative random level is considered. If  a  negative excursion of the surplus exceeds such random level  then the insurance company goes out of business. Our methodology uses excursion theory  and relies on the description of  the excursion measure away from 0   which was recently obtained by the authors  in \cite{cem}.  Our results are given in terms of the so-called scale functions.


\bigskip

\noindent {\sc Key words}:  Scale functions,  L\'evy processes, Fluctuation theory, Gerber--Shiu function, Laplace transform.\\
\noindent MSC 2010 subject classifications: 60J99, 60G51.
\end{abstract}

\vspace{0.5cm}

\section{Introduction and main results}

In the traditional ruin theory literature, ruin occurs immediately once the surplus of a company is negative. However, in practice if ruin occurs the company  usually continues its business and survives with a negative surplus in the hope of a quick recovery. With this idea in mind, many authors have been interested in defining new  risk models that allow the surplus of a company to stay negative without declaring ruin immediately. For instance,  Dassios and Embrecht \cite{DE} introduced the concept of {\it  absolute ruin}  where the company is allowed to borrow money as long as the interest force of its debt is lower than the income rate. Albrecher et al. \cite{ACT, ACT1, AIZ} introduced the idea of {\it randomized observations} in order to monitor the event of ruin at random intervals as the company balances its books on a periodic basis.  Albrecher et al. \cite{AGS} introduced the Gamma-Omega model which assumes that a company will stop doing business whenever its surplus is negative with probability $\omega(x)\ud t$ within $\ud t$ time units, where $\omega(x)$ is a positive function for $x\le 0$.  Dassios and  Wu \cite{DW} introduced the concept of {\it Parisian ruin},   where they consider the application of an implementation delay in the recognition of an insurer's capital insufficiency. More precisely, they assume that ruin occurs if  the amount of  time that the surplus of company is negative is longer than a  than a deterministic time. It is worth pointing out that this definition of ruin is referred to as Parisian ruin due to its ties with Parisian options (see Chesney et al. \cite{CJY}). In \cite{DW}, the analysis of the probability of Parisian ruin is done in the context of the classical Cram\'er--Lundberg model. More recently, Landriault et al. \cite{LRZ-0,LRZ} and Loeffen et al. \cite{LCP} considered the idea of Parisian ruin with respectively a stochastic implementation delay and a deterministic implementation delay, but in the more general setup of L\'evy insurance risk models. In \cite{LRZ-0}, the authors assume that the deterministic delay is replaced by a stochastic grace period with a pre-specified distribution, but they restrict themselves to the study of a L\'evy insurance risk process with paths of bounded variation; explicit results are obtained in the case the delay is exponentially distributed. The unbounded variation case was recently studied by Baurdoux et al. \cite{BPPR}, using excursion theory arguments for L\'evy processes.  The model with deterministic delays has also been studied in the L\'evy setup by Czarna and Palmowski \cite{CP} and by Czarna \cite{C}.

In all these models, the authors distinguish between {\it being ruined} and {\it going out of business}, where the probability of {\it going out of business} is a functional of the level of negative surplus. In this paper, we propose a new  definition of {\it going out of business} which in  some sense is motivated by the notion of  Parisian ruin. In other words instead of being interested on the length of the negative excursions of the surplus, we are interested on their depths. Roughly speaking, for a  surplus process we mark each negative excursion with an independent random variable that represents the {\it severity of debt} of the surplus in the excursion. We declare bankruptcy  if a negative excursion of the surplus  exceeds its mark or its severity of debt.  Exactly the same technique allows us to deal with a marking that depends on the current excursion, as for instance, one could think in a more realistic model where the level of debt allowed to the company is a functional of the behaviour of the surplus prior to the last ruin; nevertheless, we will not provide details about this case because the formulas become cumbersome, and we are more interested in the techniques here used.

Originally motivated by  pricing  American claims, Gerber and Shiu \cite{GS, GS1} introduced in risk theory a function that jointly penalizes the present value of the time of ruin, the surplus before ruin and the deficit after ruin for Cram\'er--Lundberg-type processes.
Since then this expected discounted penalty  function, now known as the Gerber--Shiu function, has been deeply studied. Recently, Biffis and Kyprianou \cite{BK} characterized a generalized version of this function in the setting of  L\'evy insurance risk processes.

In this paper, we study the Gerber-Shiu function for our notion of bankruptcy for general L\'evy insurance risk processes.  Our approach is based on fluctuation theory for a process of this type (spectrally negative L\'evy processes) and, under the assumption that the process posses unbounded variation paths,  on a new description  of its entrance law of the  excursion measure away from zero  due to the authors, see \cite{cem}. All our results are given in terms of the so called scale functions.

The rest of the paper is organized as follows. In the remainder of Section 1, we introduce  L\'evy insurance risk processes and their associated scale functions and we state some well-known fluctuation identities that will be useful for the sequel. We also introduce, formally speaking,  our notion of bankruptcy  in terms of the excursions away from $0$ of the L\'evy insurance risk process and we provide the main results of this paper when the surplus starts at $0$.  Firstly, we study the bounded variation case where such definition is   easier to explain since  the process spends a positive amount of time above zero a.s. We finish Section 1 with the description of the unbounded variation case using It\^o's excursion theory. It is important to note that we  should distinguish between both cases since in the bounded variation case in each finite interval there are finitely many excursions away from 0 for the L\'evy insurance risk process, opposite to the unbounded variation where there are uncountable many, and thus the results in \cite{cem} are needed.  Section 2 is devoted to the proofs of the main results.  In both cases,  the  arguments are based in the same line of reasoning that can be roughly described as follows: the time of bankruptcy occurs in the first excursion which is deep enough, which can be tought as a defective excursion, this excursion is independent of the previous ones, the number of non-defective excursions before the  first defective follows a geometric distribution (in the bounded variation case) and in the local time scale this excursion appears at an exponential time. We conclude this manuscript, Section 3, with a brief description of how to extend our results to the case when the surplus starts at any positive value.

\subsection{L\'evy insurance risk processes}

Let $X=(X_t, t\geq 0)$ be a real-valued L\'evy process defined on a  probability space $(\Omega, \mathcal{F}, \p)$.  For $x\in \R$ denote by $\p_x$ the law of the L\'evy process $X$ when it is started at $x$ and write for convenience  $\p$ in place of $\p_0$. Accordingly, we shall write $\e_x$ and $\e$ for the associated expectation operators. In this paper we shall assume throughout that $X$ is \textit{spectrally negative} meaning that it has no positive jumps and that it is not the negative of a subordinator. It is well known that the law of $X$ is  determined by its Laplace exponent $\psi:[0,\infty) \to \R$, which is defined as follows
\[
\e\Big[{\rm e}^{\lambda X_t}\Big]=:{\rm e}^{\psi(\lambda)t}, \qquad t, \lambda\ge 0.
\]
The Laplace exponent $\psi$ has a  L\'evy--Khintchine representation, i.e.
\begin{equation}
\psi(\lambda)=\gamma\lambda+\frac{\sigma^2}{2}\lambda^2+\int_{(-\infty,0)}\big({\rm e}^{\lambda x}-1-\lambda x\mathbf{1}_{\{x>-1\}}\big)\Pi(\ud x),\notag
\end{equation}
where $\gamma\in \R$, $\sigma^2\ge 0$ and $\Pi$ is a measure with support on $(-\infty, 0)$ which is known as  the L\'evy measure of $X$ and satisfies
\[
\int_{(-\infty,0)}(1\land x^2)\Pi(\ud x)<\infty.
\]

Another important fact about this class of processes is that $X$ has paths of bounded variation if and only if $\sigma^2=0$ and $\int_{(0, 1)} x\Pi(\mathrm{d}x)<\infty$. In this case $X$ can be written as follows
\begin{equation}
X_t=ct-S_t, \,\,\qquad t\geq 0,
\label{BVSNLP}
\end{equation}
where $c=\gamma+\int_{(-1, 0)} x\Pi(\mathrm{d}x)$ and $(S_t,t\geq0)$ is a driftless subordinator. Note that  necessarily $c>0$, since we have ruled out the case that $X$ has monotone paths. In this case, its Laplace exponent can be written as follows
\begin{equation*}
\psi(\lambda)= \log \mathbb{E} \left[ \mathrm{e}^{\lambda X_1} \right] = c \lambda-\int_{(0,\infty)}\big(1- {\rm e}^{-\lambda x}\big)\Pi(\ud x).\notag
\end{equation*}
The reader is referred to monographs of Bertoin \cite{B} and Kyprianou \cite{K} for a complete introduction to the theory of L\'evy processes.\\

A key element, which is currently used,  of the forthcoming analysis relies on the theory of the so-called scale functions for spectrally negative L\'evy processes. We therefore devote some paragraphs in this section reminding the reader  some of their fundamental properties. For
each $\theta\geq0$, we define  $W^{(\theta)}:\R\to [0, \infty),$ such that $W^{(\theta)}(x)=0$ for all $x<0$ and on $(0,\infty)$ is the unique continuous function with Laplace transform
\begin{eqnarray}
\int^{\infty}_0\mathrm{e}^{-\lambda x}W^{(\theta)}(x){\rm d}x=\frac1{\psi(\lambda)-\theta},
\qquad \lambda>\Phi(\theta),\notag
\end{eqnarray}
where $ \Phi(\theta) = \sup\{\lambda \geq 0: \psi(\lambda) = \theta\}$ which is well defined and finite for all $\theta\geq 0$, since $\psi$ is a strictly convex function satisfying $\psi(0) = 0$ and $\psi(\infty) = \infty$. For convenience, we write $W$ instead of $W^{(0)}$. Associated to the functions $W^{(\theta)}$ are the functions $Z^{(\theta)}:\R\to[1,\infty)$ defined by
\[
Z^{(\theta)}(x)=1+\theta\int_0^x W^{(\theta)} (y)\ud y,\qquad \theta\ge 0.
\]
Together, the functions $W^{(\theta)}$ and $Z^{(\theta)}$ are collectively known as $\theta$-scale functions and predominantly appear in almost all fluctuations identities for spectrally negative L\'evy processes.

When $X$ has paths of bounded variation, without further assumptions, it can only be said that the function $W^{(\theta)}$ is almost everywhere differentiable on $(0,\infty)$. However, in the case that $X$ has paths of unbounded variation, $W^{(\theta)}$ is continuously differentiable on $(0,\infty)$; cf. Chapter 8 in \cite{K}. Throughout this text we shall write $W^{(\theta)\prime}$ to mean the well-defined derivative in the case of unbounded variation paths and a version of the density of $W^{(\theta)}$ with respect to Lebesgue measure in the case of bounded variation paths. This should cause no confusion as, in the latter case, $W^{(\theta)\prime}$ will only appear inside  Lebesgue integrals.

The theorem below is a collection of known fluctuation identities which will be used along this work. See for instance Chapter 8 of \cite{K} for proofs and the origin of these identities.
\begin{theorem}\label{fi}
Let $X$ be a spectrally negative L\'evy process and 
\begin{equation}
\tau_a^+=\inf\{t>0:X_t>a\}\qquad\text{and}\qquad\tau_0^-=\inf\{t>0:X_t<0\}.\notag
\end{equation}
\begin{itemize}
\item[(i)] For $\theta\geq 0$ and $x\leq a$
\begin{equation}\label{fi2}
\mathbb{E}_x\Big[e^{-\theta\tau_a^+}\mathbf{1}_{\{\tau_a^+<\infty\}}\Big]=e^{-\Phi(\theta)(a-x)}\qquad\textrm{and}\qquad\mathbb{E}_x\Big[e^{-\theta\tau_a^+}\mathbf{1}_{\{\tau_a^+\le \tau_0^-\}}\Big]=\frac{W^{(\theta)}(x)}{W^{(\theta)}(a)}.
\end{equation}
\item[(ii)] Let $a>0$, $x\in (0,a],q\geq 0$ and $f,g$ be positive, bounded measurable functions. Then
\begin{align}\label{fi1}
\mathbb{E}_x\Big[e^{-\theta\tau_0^-}&f(X_{\tau_0^-})g(X_{\tau_{0-}^-})\mathbf{1}_{\{\tau_0^-<\tau_a^+\}}\Big]\notag\\
&=\frac{\sigma^2}{2}f(0)g(0)\mathcal{O}^{(\theta)}(a,x)+\int_0^a\int_{(-\infty,-y)}f(y+u)g(y)\mathcal{W}^{(\theta)}(a,x,y)\Pi(\ud u) \ud y.
\end{align}
where
\begin{align*}
\mathcal{W}^{(\theta)}(a,x,y)&:=\frac{W^{(\theta)}(x)W^{(\theta)}(a-y)}{W^{(\theta)}(a)}-W^{(\theta)}(x-y), \\
\mathcal{O}^{(\theta)}(a,x)&:=W^{(\theta)\prime}(x)-W^{(\theta)}(x)\frac{W^{(\theta)\prime}(a)}{W^{(\theta)}(a)}.
\end{align*}
\end{itemize}
To simplify notation, we denote by $\mathcal{W}$ and $\mathcal{O}$ for $\mathcal{W}^{(0)}$ and $\mathcal{O}^{(0)}$, respectively. 
\end{theorem}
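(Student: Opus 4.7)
The plan is to establish each identity by fluctuation-theoretic techniques that are by now standard in the literature on spectrally negative L\'evy processes; the route I would follow is essentially the one in Chapter~8 of Kyprianou's monograph.

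For part (i), I would handle the two identities separately. The first is the Laplace transform of the first upward passage time, which follows from the exponential martingale $M_t=\exp(\Phi(\theta)X_t-\theta t)$; this is a $\mathbb{P}_x$-martingale because $\psi(\Phi(\theta))=\theta$. Applying optional stopping at $\tau_a^+\wedge n$, using that $X$ has no positive jumps so $X_{\tau_a^+}=a$ on $\{\tau_a^+<\infty\}$, and letting $n\to\infty$ by bounded convergence gives the stated expression. For the two-sided exit identity, I would verify that $\{e^{-\theta(t\wedge\tau_0^-)}W^{(\theta)}(X_{t\wedge\tau_0^-})\}_{t\ge 0}$ is a local martingale (which follows from the Laplace-transform characterisation of $W^{(\theta)}$ via an approximation argument when $W^{(\theta)}$ is not smooth) and then apply optional stopping at $\tau_a^+\wedge\tau_0^-$, together with the boundary values $W^{(\theta)}\equiv 0$ on $(-\infty,0)$ and $W^{(\theta)}(a)$ at $\tau_a^+$.

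For part (ii), I would use the compensation formula for the Poisson random measure of jumps of $X$ to split ruin into two mutually exclusive contributions: a downward jump across $0$ and, when $\sigma^2>0$, a continuous crossing of $0$ coming from the Brownian component. For the jump contribution, the compensation formula applied to the predictable integrand $H(s,X_{s-},u)=e^{-\theta s}f(X_{s-}+u)g(X_{s-})\mathbf{1}_{\{s<\tau_a^+\}}\mathbf{1}_{\{X_{s-}+u<0\}}$ yields
\[
\mathbb{E}_x\Bigl[\int_0^{\tau_a^+}e^{-\theta s}g(X_s)\int_{(-\infty,-X_s)}f(X_s+u)\Pi(\ud u)\,\ud s\Bigr].
\]
The outer expectation in $s$ reduces to integration against the $\theta$-potential density of $X$ killed upon exiting $[0,a]$; this density is precisely $\mathcal{W}^{(\theta)}(a,x,y)\,\ud y$ on $(0,a)$ and can be derived by combining the first-passage identity of (i) with the strong Markov property at $\tau_0^-$ to subtract, from the $(-\infty,a]$-killed resolvent, the contribution of paths that first exit below $0$.

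The creeping term is present only if $\sigma^2>0$ and accounts for the event that $X$ reaches $0$ continuously; its density in $x$ is the classical creeping density $\mathcal{O}^{(\theta)}(a,x)$, with the $\sigma^2/2$ prefactor coming from the quadratic variation of the Brownian component, which can be made precise by applying It\^o's formula to $e^{-\theta t}W^{(\theta)}(X_t)$ on $[0,\tau_a^+\wedge\tau_0^-]$ and reading off the absolutely continuous part of the exit distribution at $0$. The main technical obstacle is the clean identification of $\mathcal{W}^{(\theta)}$ and $\mathcal{O}^{(\theta)}$ in closed form; once these are in place, formula~(\ref{fi1}) follows by integrating $f(\,\cdot\,)g(\,\cdot\,)$ against these two densities, since on $\{\tau_0^-<\tau_a^+\}$ ruin occurs either by a Poissonian jump from a position $y\in(0,a)$ to $y+u<0$, or by a continuous descent to $0$.
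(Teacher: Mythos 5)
The paper offers no proof of this theorem at all---it is stated as a collection of known fluctuation identities with an explicit pointer to Chapter~8 of Kyprianou's monograph---and your sketch reproduces precisely the standard arguments found there: the Wald martingale $e^{\Phi(\theta)X_t-\theta t}$ for one-sided passage, the stopped $e^{-\theta t}W^{(\theta)}(X_t)$ martingale for two-sided exit, and the compensation formula combined with the $[0,a]$-killed potential density $\mathcal{W}^{(\theta)}(a,x,y)$ plus the creeping correction $\tfrac{\sigma^2}{2}\mathcal{O}^{(\theta)}(a,x)$. Your approach is therefore the same as the one the paper relies on; the only small imprecision is that in your compensation-formula display the time integral should be cut off at $\tau_a^+\wedge\tau_0^-$ (equivalently, the integrand should carry $\mathbf{1}_{\{\underline{X}_{s-}\ge 0\}}$), which you in effect do when you pass to the resolvent of the process killed on exiting $[0,a]$.
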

Another important quantity in what follows is the so-called $q$-resolvent of the L\'evy process $X$ which is defined by
 $$U_{q}(\ud y):=\int^{\infty}_{0}e^{-qt}\p(X_{t}\in \ud y)\ud t,\qquad y\in \mathbb{R}.$$
The $q$-resolvent $U_q$ is absolutely continuous with respect to Lebesgue measure and its density satisfies
\begin{equation}\label{resolvent-density}
u_{q}(y)=\Phi^{\prime}(q)e^{-\Phi(q)y}-W^{(q)}(-y),\qquad y\in \mathbb{R},
\end{equation} 
see for instance Corollary 8.9 in \cite{K} and exercise 2 in Chapter VII in \cite{B}. The latter identity  and the fact that scale functions take the value $0$ at $(-\infty,0)$ implies
\begin{equation}\label{resolvent-div}
\frac{u_{q}(x)}{u_{q}(0)}=e^{-x\Phi(q)},\qquad x>0.
\end{equation}
Throughout  this paper, we assume that the L\'evy insurance risk process X satisfies the net profit condition, i.e. 
\[
\e[X_1]=\psi^\prime(0+)> 0.
\] 
In this paper we are interested in studying Gerber-Shiu penalty function at time to bankruptcy in a parisian type  risk model. We split our study in two cases the bounded variation case and the unbounded variation case. The former is much simpler than the latter, as we will see later a full description of the excursion measure away from zero is necessary.  The description of the excursion measure away from zero  for spectrally negative L\'evy processes was  recently obtained by the authors  in \cite{cem}.

\subsection{Bounded variation case}
We first give a descriptive definition of the time
to bankruptcy, here denoted by  $T_B$, in the bounded variation case which can be explained in a very simple way.  In order to do so, we first recall that under this assumption the state 0 is irregular for $(-\infty, 0)$ (see for instance Corollary VII.5 in Bertoin \cite{B}), implying that a typical excursion of the process $X$ away from 0 behaves as follows: the excursion starts at 0 from above and stays in the positive half line for some period of time before crossing the state 0 by a jump and then the excursion follows until it reach 0 continuously. For our purpose, we are interested in the negative part of such excursions. 

Let $Y$ be a non-negative random variable which is independent of the L\'evy insurance risk process $X$, that represents the severity of debt at bankruptcy in this model. We assume that each negative  excursion away  from zero is accompanied by an independent copy of $Y$. We will refer to each of such random variables as the
severity of debt in the excursion. We say that bankruptcy occurs at the first time that any negative excursion is below the level $-Y$. Since $X$ has paths of bounded variation, the number of negative excursions before  bankruptcy occurs is  finite a.s. More precisely, let $(Y_k)_{k\geq 1}$ be a sequence of iid copies of the r.v. $Y$. Also, let us define, recursively, 
 two sequences
of stopping times $(\tau^{-,k}_{0})_{k\geq 1}$ and $(\tau^{+,k}_{0})_{k\geq 1}$ as follows. Set $\tau_0^{+,0}=0$ and for  $k\geq 1$, let
 \[
\tau_{0}^{-,k}=\inf\{t>\tau_{0}^{+,k-1}:X_t<0\}\qquad\textrm{and}\qquad \tau_{0}^{+,k}=\inf\{t>\tau_{0}^{-,k}:X_t>0\},
\]
which represent the first time, after $\tau_{0}^{+,k-1}$, that the process $X$ enters $(-\infty,0)$ and  the first time, after $\tau_{0}^{-,k}$, that $X$ enters $(0,\infty)$, respectively. Finally, we introduce the following sequence of stopping times $(\tau_{-Y_k}^{-,k})_{k\geq 1}$ that will be helpful in our definition of time to bankruptcy. Each element of the  latter is defined   as follows; for each $k\geq 1$ 
\begin{equation}
\tau_{-Y_k}^{-, k}=\inf\{t\geq 0 :X_{\tau_{0}^{-, k}+t}<-Y_k\},
\end{equation}
with the customary assumption that $\inf\{\emptyset\}=\infty.$
Thus, the time to bankruptcy $T_B$ is defined as follows
\[
T_B=\tau_{0}^{-, \kappa_B}+\tau_{-Y_{\kappa_B}}^{-, \kappa_B},\qquad\textrm{ where }\qquad
\kappa_B=\inf\{k\geq 1:\tau_{-Y_k}^{-,k}<\tau_{0}^{+,k}\}.\notag
\]
 Notice that $\kappa_B$ is the number of excursions necessary to observe a defective excursion for the first time.

As we mentioned before, we are interested in computing the Gerber-Shiu expected discounted penalty function for this new definition of bankruptcy. In order to do so, we will consider a measurable function $f:[0,\infty)^2\to[0,\infty)^2$, a constant $b>0$ and  a discounted force of interest $q\geq 0$. The penalty function, when the initial revenue of the insurance company is $x\ge 0$, is given by
\begin{equation}
\phi_f(x,q,b)=\e_x\left[e^{-qT_B}f(X_{T_B-},X_{T_B})\mathbf{1}_{\{T_B<\tau_b^+\}}\right],\notag
\end{equation}
where $\tau_b^+=\inf\{t>0:X_t>b\}$.

The Gerber-Shiu expected discounted penalty function for the L\'evy insurance risk process, in the bounded variation case,  at time $T_{B}$ is given by the following result.  To simplify notation, we introduce for $\theta, b>0$ and $x\in (-\infty,b)$,  
\[
\mathcal{H}^{(\theta)}(b, x)=\frac{W^{(\theta)}(b-x)}{W^{(\theta)}(b)}-e^{\Phi(\theta)b}\frac{W^{(\theta)}(-x)}{W^{(\theta)}(b)}.
\]
It is important to note that for  $x\in(0,b)$, we have
\[
\mathcal{H}^{(\theta)}(b, x)=\frac{W^{(\theta)}(b-x)}{W^{(\theta)}(b)}.
\]

\begin{theorem}\label{bvcase} 
For any $q\geq0$, we have
\begin{equation*}
\phi_f(0,q,b)=\frac{\mathcal{A}_f(q,b)+\mathcal{B}_f(q,b)}{\displaystyle\frac{1}{W^{(q)}(0+)}-\displaystyle \e\left[\int_0^{\infty}\mathcal{H}^{(q)}(b, y)\int_{(-y-Y,-y)}\frac{W^{(q)}(y+x+Y)}{W^{(q)}(Y)}\Pi(\ud x)\ud y\right]},
\end{equation*}
where
\begin{align}
\mathcal{A}_f(q,b)&:= \e\left[\int_0^{b} \mathcal{H}^{(q)}(b, y)\int_{(-\infty,-y-Y)}f(y,x+y)\Pi(\ud x)\ud y\right],\label{event21}\\
\mathcal{B}_f(q,b)&:=\e\bigg[\int_0^{b}\mathcal{H}^{(q)}(b, y)\int_{(-y-Y, -y)}\int_0^Y\int_{(-\infty,-z)}f(z-Y,u+z-Y)\label{eqb}\\
&\hspace{7cm}\times \mathcal{W}^{(q)}(Y, y+x+Y, z) \Pi(\ud u)\ud z\Pi(\ud x)\ud y\bigg] \notag.
\end{align}
\end{theorem}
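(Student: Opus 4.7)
The plan is to exploit the discrete excursion structure of the bounded-variation case. Since $0$ is irregular for $(-\infty,0)$ and $X$ has no positive jumps, at each epoch $\tau_0^{+,k}$ we have $X_{\tau_0^{+,k}}=0$; combined with the independent marks $(Y_k)$ this makes the decorated excursions i.i.d., and $\kappa_B$ is geometrically distributed. Rather than sum the geometric series explicitly, I would introduce
\[
\alpha(q,b) := \e\Big[e^{-q\tau_0^{+,1}}\mathbf{1}_{\{\tau_0^{+,1}<\tau_b^+\}\cap\{\tau_0^{+,1}<\tau_{-Y_1}^{-,1}\}}\Big],
\]
\[
\beta_f(q,b) := \e\Big[e^{-qT_B^{(1)}}f(X_{T_B^{(1)}-},X_{T_B^{(1)}})\mathbf{1}_{\{\tau_{-Y_1}^{-,1}<\tau_0^{+,1}\}\cap\{T_B^{(1)}<\tau_b^+\}}\Big],
\]
with $T_B^{(1)}:=\tau_0^{-,1}+\tau_{-Y_1}^{-,1}$. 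Conditioning on whether the first decorated excursion is defective and applying the strong Markov property at $\tau_0^{+,1}$ on the non-defective sub-event (where $X_{\tau_0^{+,1}}=0$) yields the renewal identity $\phi_f(0,q,b) = \beta_f(q,b)+\alpha(q,b)\phi_f(0,q,b)$, whence $\phi_f(0,q,b) = \beta_f(q,b)/(1-\alpha(q,b))$.

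The next step is to derive closed forms for $\alpha$ and $\beta_f$. In each case I would fix $Y_1=y_0$ by Fubini and integrate against the joint law of the first undershoot/overshoot $(X_{\tau_0^{-,1}-},X_{\tau_0^{-,1}}) = (y,y+x)$ by means of Theorem \ref{fi}(ii) applied with starting position $0+$ and right endpoint $b$. Since $\sigma^2=0$ and $\mathcal{W}^{(q)}(b,0+,y) = W^{(q)}(0+)\,\mathcal{H}^{(q)}(b,y)$ for $y\in(0,b)$, a common factor $W^{(q)}(0+)$ is extracted in every case. For $\alpha(q,b)$, the strong Markov property at $\tau_0^{-,1}$ reduces the non-defective event to a two-sided exit of the translated process $X+y_0$ from $(0,y_0)$ through the upper boundary; Theorem \ref{fi}(i) delivers the discounted probability $W^{(q)}(y+x+y_0)/W^{(q)}(y_0)$, valid when $x\in(-y-y_0,-y)$. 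Taking expectation in $Y_1$ then produces, after dividing through by $W^{(q)}(0+)$, precisely the term subtracted from $1/W^{(q)}(0+)$ in the denominator of the claimed formula (and one uses $\mathcal{H}^{(q)}(b,y)=0$ for $y\ge b$ to extend the $y$-integral to $(0,\infty)$).

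For $\beta_f(q,b)$ I would split the defective event according to $\{T_B^{(1)}=\tau_0^{-,1}\}$ versus $\{T_B^{(1)}>\tau_0^{-,1}\}$. In the first sub-case $x<-y-Y_1$ and a single use of Theorem \ref{fi}(ii) gives the contribution $W^{(q)}(0+)\mathcal{A}_f(q,b)$. In the second sub-case $x\in(-y-Y_1,-y)$, and the strong Markov property at $\tau_0^{-,1}$ reduces the residual problem to a two-sided exit of $X+Y_1$ from $(0,Y_1)$ through the lower boundary, with starting position $y+x+Y_1\in(0,Y_1)$. A second application of Theorem \ref{fi}(ii) (now with right endpoint $Y_1$ and an inner jump variable $u$, via a monotone-class argument to treat the bivariate penalty $f$) produces the kernel $\mathcal{W}^{(q)}(Y_1,y+x+Y_1,z)$ and converts the pre-jump and post-jump positions of the inner exit into $z-Y_1$ and $z+u-Y_1$; taking expectation in $Y_1$ yields $W^{(q)}(0+)\mathcal{B}_f(q,b)$. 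Hence $\beta_f = W^{(q)}(0+)(\mathcal{A}_f+\mathcal{B}_f)$, and dividing numerator and denominator of $\beta_f/(1-\alpha)$ by $W^{(q)}(0+)$ gives the formula in the statement. The main subtlety lies in the nested use of Theorem \ref{fi}(ii) together with the translation by the independent random level $Y_1$; once the pre-image of the defective event under that translation is written out explicitly, the rest amounts to bookkeeping of the arguments of $f$ and $\mathcal{W}^{(q)}$.
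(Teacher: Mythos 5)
Your proposal is correct and follows essentially the same route as the paper: both rest on the i.i.d.\ decomposition of the marked excursions (geometric number of non-defective ones), the identity $\mathcal{W}^{(q)}(b,0+,y)=W^{(q)}(0+)\mathcal{H}^{(q)}(b,y)$, and the same two-case split of the defective excursion handled by Theorem \ref{fi}(i)--(ii) after translating by the independent level $Y$. The only cosmetic difference is that you solve the renewal equation $\phi_f=\beta_f+\alpha\,\phi_f$ where the paper evaluates the geometric generating function explicitly; these are the same computation.
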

It is important to note that denominator in the Gerber-Shiu functional in the above result is always positive. As we will see in the proof of this result, the expected value in the denominator is smaller than  $(W^{(q)}(0+))^{-1}$.

\subsection{Unbounded variation case, an approach using Ito's excursion theory}

Now, we describe the time to bankruptcy when the L\'evy insurance risk process $X$ has paths of unbounded variation.   Recall that under this assumption,  the state $0$ is regular for  itself, that is to say the process $X$ returns to the origin at arbitrarily small times a.s. We also recall that  this occurs if and only if  either $\sigma^{2}>0$ or  
\[
\int_{(-\infty,0)}(1\land |x|)\Pi(\ud x)=\infty.
\]
The previous comment suggest that a more sophisticated technique is needed since the behaviour of a typical excursion is not as simple as in the bounded variation case. Moreover, we may have an infinite number of excursion on small periods of time. Therefore a full description of the   excursions away from 0 of $X$ is needed in order to introduce  our definition of  bankruptcy. Our methodology is largely base on the recent paper of the authors in \cite{cem} where  excursion theory  for spectrally negative L\'evy processes away from 0 is studied.

General theory of Markov process, see e.g. Chapter IV in \cite{B}, ensures that there exists a local time at zero for $X,$ here denoted by  $(L_{t}, t\geq0).$  The right-continuous inverse of the  local time  $L$ is defined as follows
\begin{equation}\label{ilt}
L^{-1}_{t}=\inf\{s>0: L_{s}>t\},\qquad t\geq 0,
\end{equation}
which is known to be a, possibly killed,  subordinator. Let $\mathcal{E}$ be the space of real valued c\'adl\'ag paths  and $\Upsilon$ be an isolated state. The main result in It\^{o}'s excursion theory, as described in~\cite{blumenthal-exc} or \cite{fitzsimmons-getoor-exc}, ensures that the point process $(e_{t}, 0<t\leq L_{\infty})$ on $\mathcal{E}\cup\{\Upsilon\}$ defined by
\begin{equation}\label{excdef}
e_{t}=\begin{cases}
X_{\tau_{t-}+s}, 0\leq s\leq L^{-1}_{t}-L^{-1}_{t-}, & \text{ if }  \quad L^{-1}_{t}-L^{-1}_{t-}>0,\\
\Upsilon, & \text{ if }  \quad L^{-1}_{t}-L^{-1}_{t-}=0,
\end{cases}
\end{equation}
is a Poisson point process with intensity measure, say $\mathbf{n},$ stopped at the first point $e\in \mathcal{E}$ whose lifetime is infinite. The measure $\mathbf{n}$ is also called the excursion measure away from $0$ for $X.$ For further details on excursion theory  for spectrally negative L\'evy processes away from 0 and the description of the excursion measure $\mathbf{n}$, the reader is  referred to \cite{cem}.

 In order to describe the time to bankruptcy for $X$, we will mark each excursion away from zero of the L\'evy insurance risk process,  in other words, the excursion at local time $s$, say $e_{s},$ will be marked by a random variable $Y_{s}$, independent of $X$,  which has the same law as the r.v. $Y$, and thus bankruptcy will occur if this excursion is defective, which in this setting is taken one such that
\[
\inf_{0<u<\zeta}e_{s}(u)<-Y_{s}<0,\qquad \textrm{ or whenever }\qquad\inf_{0<u<\zeta}e_{s}(u)=0,\qquad\text{and} \quad\zeta>\bf{e}_{\lambda},
 \]
 where $\bf{e}_{\lambda}$ is an exponential r.v. of parameter $\lambda>0$.

 \par Observe that the latter possibility covers the case where bankruptcy occurs by creeping, that is the surplus hits the level $0$ from above, which when the excursion is long enough  (longer than an independent exponential r.v. of parameter $\lambda>0$) can be interpreted as a ruin due to an incorrect management of the insurance company and hence the bankrupt can not be avoided. This event has a strictly positive probability if and only if the surplus  creeps downwards, which  is equivalent to have a non-zero Brownian term in the Laplace exponent of the process $X$, i.e. $\sigma>0$. {\color{blue}}  
  
  The Gerber-Shiu expected discounted penalty function for the L\'evy insurance risk process, in the unbounded variation case,  at time $T_{B}$ is given by the following result. For the precise definition of $T_{B}$ in terms of excursions see the beginning of the Proof of the next theorem (see Section 2.2).
\begin{theorem}\label{ubvcase}
For any $q\geq0$ we have
\begin{align}
\phi_f(0,q,b)=\frac{\displaystyle \mathcal{A}_f(q,b)+\mathcal{B}_f(q,b)+\frac{\sigma^2}{2}\Big(\mathcal{U}_f(q,b)+\mathcal{C}_f(q)+f(0,0)\mathcal{J}(\lambda,q,b)\Big)}{\displaystyle \frac{e^{\Phi(q)b}}{W^{(q)}(b)}+\mathcal{D}(q,b)+\mathcal{E}(q,b)+\frac{\sigma^2}{2}\Big(\mathcal{F}(q)+\mathcal{J}(\lambda, q,b)\Big)}.\notag
\end{align}
where $\mathcal{A}_f(q,b)$ and $\mathcal{B}_f(q,\lambda,b)$ are defined as in Theorem 2 and 
\begin{align}
\mathcal{D}(q,b)&=\e\left[\int^{b}_{0}\mathcal{H}^{(q)}(b, x)\int_{(-\infty, -x-Y)}e^{\Phi(q)(x+y)}\Pi(\ud y)\ud x \right],\label{event2}\\
\mathcal{E}(q,b)&=\e\left[\int^{b}_{0}\mathcal{H}^{(q)}(b, x)\int_{(-x-Y,-x)}\left(e^{\Phi(q)(x+y)}-\frac{W^{(q)}(x+y+Y)}{W^{(q)}(Y)}\right)\Pi(\ud y)\ud x\right],\label{eqe}\\
\mathcal{F}(q)&=-\frac{\sigma^2}{2}\e\left[e^{-\Phi(q)Y}\frac{\partial}{\partial x}\mathcal{O}^{(q)}(Y, Y)\right]+\e\left[\int_0^Y\int_{(-\infty,-y)}e^{\Phi(q)(y+z-Y)}\mathcal{O}^{(q)}(Y, Y-y)\Pi(\ud z)\ud y \right].\notag\\
\mathcal{J}(\lambda, q,b)&= \int_{-\infty}^{b}\left((\lambda+q)\mathcal{H}^{(\lambda+q)}(b, y)-q\mathcal{H}^{(q)}(b, y)\right)\mathcal{O}(b,y)\ud y,\label{event1}
\end{align}
\begin{align}
\mathcal{C}_f(q)&=-\frac{\sigma^2}{2}\e\left[f(-Y,-Y)\frac{\partial}{\partial x}\mathcal{O}^{(q)}(Y,Y)\right]\notag\\
&\hspace{2cm}+\e\left[\int_0^Y\int_{(-\infty,-y)}f(y-Y,y+z-Y)\mathcal{O}^{(q)}(Y,Y-y)\Pi(\ud z)\ud y\right],\label{eqlem51}\\
\mathcal{U}_f(q,b)&=\e\left[\int_{0}^{b} \mathcal{H}^{(q)}(b,x) \int_{(-x-Y,-x)}\mathcal{O}^{(q)}(Y,Y+x+y)f(-Y, -Y)\Pi(\ud y)\ud x\right].\label{equ}
\end{align}
\end{theorem}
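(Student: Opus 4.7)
The plan is to follow the proof strategy of Theorem \ref{bvcase}, replacing the geometric counting of excursions by It\^o's excursion theory for the Poisson point process of excursions away from $0$, together with the explicit description of the entrance law of $\mathbf{n}$ from \cite{cem}. First, I would label the excursions $(e_{s})_{s>0}$ by their local-time index and attach to each an independent pair $(Y_{s},T_{s})$ with $Y_{s}\sim Y$ and $T_{s}\sim\mathrm{Exp}(\lambda)$; conditionally on the marks, $\{(e_{s},Y_{s},T_{s})\}_{s>0}$ is a Poisson point process on $\mathcal{E}\times\R_{+}^{2}$ with intensity $\mathbf{n}(\ud e)\otimes P_{Y}(\ud y)\otimes\lambda e^{-\lambda t}\ud t$. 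I would call $e_{s}$ \emph{defective} if $\inf_{u}e_{s}(u)<-Y_{s}$, or $\inf_{u}e_{s}(u)=0$ and $\zeta(e_{s})>T_{s}$, and \emph{$b$-overshooting} if $\sup_{u}e_{s}(u)>b$; writing $\varsigma^{*}$ for the local-time index of the first defective excursion and $T_{B}^{\mathrm{exc}}$ for the explicit bankruptcy time inside $e_{\varsigma^{*}}$, the bankruptcy time decomposes as
\[
T_{B}=L^{-1}_{\varsigma^{*}-}+T_{B}^{\mathrm{exc}}\bigl(e_{\varsigma^{*}},Y_{\varsigma^{*}},T_{\varsigma^{*}}\bigr),
\]
and $\{T_{B}<\tau_{b}^{+}\}$ becomes the event that no excursion with index $<\varsigma^{*}$ is $b$-overshooting and that within $e_{\varsigma^{*}}$ bankruptcy strictly precedes any crossing of $b$.

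I would then apply the compensation and exponential formulas for this Poisson point process to obtain the ratio form
\[
\phi_{f}(0,q,b)=\int_{0}^{\infty}e^{-s\,\mathfrak{D}(q,b)}\mathfrak{N}(q,b)\,\ud s=\frac{\mathfrak{N}(q,b)}{\mathfrak{D}(q,b)},
\]
where $\mathfrak{N}(q,b)$ is the single-excursion average, under $\mathbf{n}\otimes P_{Y}\otimes\lambda e^{-\lambda t}\ud t$, of $e^{-qT_{B}^{\mathrm{exc}}}f(X_{T_{B}^{\mathrm{exc}}-},X_{T_{B}^{\mathrm{exc}}})\mathbf{1}_{\{\text{defective, no earlier }b\text{-crossing}\}}$, and $\mathfrak{D}(q,b)$ is the corresponding $e^{-q\cdot}$-weighted rate at which an excursion first becomes defective or $b$-overshooting; the factor $1/\mathfrak{D}(q,b)$ absorbs both the Laplace exponent of the inverse local time $L^{-1}$ and the extra killing by bad excursions. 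It would then remain to identify $\mathfrak{N}$ with the numerator and $\mathfrak{D}$ with the denominator of the theorem.

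This identification would come from decomposing each excursion into five mutually exclusive mechanisms: (i) jump from a positive height $y\in(0,b)$ directly below $-Y$, yielding the pair $(\mathcal{A}_{f},\mathcal{D})$; (ii) jump into the strip $(-Y,0)$ followed by another jump below $-Y$, yielding $(\mathcal{B}_{f},\mathcal{E})$ via the two-sided exit kernel $\mathcal{W}^{(q)}$ of Theorem \ref{fi}(ii); (iii) jump into the strip followed by continuous downward creeping to $-Y$, active only when $\sigma>0$, yielding $\mathcal{U}_{f}$ and part of $\mathcal{F}$ through $\mathcal{O}^{(q)}$; (iv) excursion entering $(-\infty,0)$ by Brownian creeping at $0$ and subsequently absorbed in or below the strip, yielding $\mathcal{C}_{f}$ and the remainder of $\mathcal{F}$; (v) purely positive excursion with $\zeta>T_{\lambda}$, producing $f(0,0)\mathcal{J}$ and $\mathcal{J}$. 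The weight $\mathcal{H}^{(q)}(b,y)$ would arise throughout as the ``not yet $b$-overshooting'' factor at positive spatial height $y$ inside the excursion, and the stand-alone denominator term $e^{\Phi(q)b}/W^{(q)}(b)$ would be the $q$-weighted rate at which an excursion first crosses $b$. Each piece would be computed from the entrance law of $\mathbf{n}$ from \cite{cem} together with the identities of Theorem \ref{fi}.

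The hard part will be the creeping cases (iii)--(iv) and the long-excursion case (v). For (iii)--(iv) I would need to isolate the $\sigma^{2}/2$-scaled Brownian part of $\mathbf{n}$, which is exactly what produces the derivative $\partial_{x}\mathcal{O}^{(q)}$ in $\mathcal{C}_{f}$ and $\mathcal{F}$; only the refined entrance-law description of \cite{cem} makes this rigorous. For (v) I would need to compute
\[
\mathbf{n}\bigl(e^{-q\zeta}\bigl(1-e^{-\lambda\zeta}\bigr)\mathbf{1}_{\{\inf e=0,\,\sup e<b\}}\bigr)
\]
by a limiting two-sided exit argument based on Theorem \ref{fi}: the excursion-stays-below-$b$ condition generates the $\mathcal{H}^{(q)}$ weight, while the additional exponential killing by $T_{\lambda}$ shifts the Laplace argument from $q$ to $q+\lambda$, together producing the combination $(\lambda+q)\mathcal{H}^{(\lambda+q)}(b,y)-q\mathcal{H}^{(q)}(b,y)$ appearing in $\mathcal{J}$. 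Assembling the five pieces and simplifying would then yield the announced formula.
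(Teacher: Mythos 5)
Your proposal follows essentially the same route as the paper: the ratio $\mathfrak{N}(q,b)/\mathfrak{D}(q,b)$ is exactly the identity the paper obtains by splitting at $g_{T_B}$ and applying the Master and exponential formulas, your five mechanisms match the paper's cases (i)--(v), and the key computations (the combination $(\lambda+q)\mathcal{H}^{(\lambda+q)}-q\mathcal{H}^{(q)}$ for $\mathcal{J}$, the $\partial_x\mathcal{O}^{(q)}$ terms coming from the Brownian part of the entrance law of $\mathbf{n}$ via \cite{cem}) are precisely those carried out in the paper's Lemmas 1--5. The only slip is one of bookkeeping: the denominator contribution of your mechanism (iii) (jump into the strip followed by creeping to $-Y$) is absorbed into $\mathcal{E}(q,b)$ together with that of mechanism (ii), while $\mathcal{F}(q)$ comes entirely from the Brownian-entry case, your mechanism (iv).
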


\section{Proofs:}
\subsection{Proof of Theorem \ref{bvcase} (bounded variation case)}
\begin{proof}

We are interested in computing
\begin{equation}\label{ott}
\mathbb{E}\left[e^{-q T_B}f(X_{T_B-},X_{T_B})\mathbf{1}_{\{T_B<\tau_b^+\}}\right].
\end{equation}
A crucial point in our analysis is that $0$ is irregular for $(-\infty,0)$ for $X$ on account of it having paths of bounded variation and moreover that $X$ does not creep downwards. This means that each excursion of $X$ away from $0$, under $\mathbb{P}$,  consists of a copy of $(X_t, t\leq \tau^-_0)$ issued from $0$ and 
the excursion continues from the time $\tau^-_0$ as an independent copy of $(X_t, t\leq \tau^+_0)$ issued from $X_{\tau^{-}_{0}}.$ We will denote a typical excursion by $\xi=(\xi_{s}, s\geq 0)$ and by $\zeta$ its lifetime, $\zeta=\inf\{t>0: \xi_{t}=0\}.$ We will be considering excursions that do not exceed the level $b$ from above.

Here, we  express (\ref{ott}) in terms of the excursions of the process $X$ whose heights are lower than  $b$ and its depths are smaller than independent random variables with the same distribution as $Y$  and, subsequently, the first excursion whose height  is bigger than $b$ or its depth is bigger than an independent random variable with the same law as $Y$.   More precisely, let  $\xi^{(i)}=(\xi_s^{(i)}, 0\le s\le \ell_i)$ be the  $i$-th excursion  of $X$ away from $0$ confined to the interval $[-Y_i,b]$, here $\ell_i$ denotes the length of the excursion $\xi^i$. Similarly, let $\xi^*=(\xi^*_s, 0\le s\le\ell^*)$  be the first excursion  of $X$ away from $0$ that exits the interval $[-Y^*,b]$, where $Y^*$ is an independent copy of $Y$, and $\ell_*$ its length. 
 
 From the strong Markov property, it is clear that the random variables $e^{-q\ell_i}$  are i.i.d. and  independent of $e^{-q\ell^*}f(\xi^*_{\ell^*-},\xi^*_{\ell^*})1_{\{\ell^*<\infty\}}$.  Define 
 $$
 E=\left\{\sup_{t\leq \zeta} \xi_t\leq b,  \inf_{t\leq \zeta} \xi_t \geq -Y\right\},
 $$ and
$p=\p(E)$. A standard description of the excursions $(\xi^{(i)})_{i\ge 1}$ dictates that the number of finite excursions is distributed according to an independent geometric  random variable, say $G_p$, (supported on $\{0,1,2,\ldots\}$) with parameter $p$, the random variables $e^{-q\ell_i}$ are equal in distribution to $e^{-q\zeta}$ under the conditional law $\p(\cdot|E)$ and the random variable $e^{-q\ell^*}f(\xi^*_{\ell^*-},\xi^*_{\ell^*})\mathbf{1}_{\{\ell^*<\infty\}}\mathbf{1}_{\{\xi^*_{\ell^*}<-Y\}}$ is equal in distribution $e^{-q\tau_{-Y}^-}f(X_{\tau_{-Y}^- -},X_{\tau_{-Y}^-})\mathbf{1}_{\{\tau_{-Y}^-<\tau_b^+\}}$ but now under the conditional law $\p(\cdot|E^c)$.
It now follows that
\begin{align}
\mathbb{E}\bigg[e^{-q T_B}f(X_{T_B-},X_{T_B})\mathbf{1}_{\{T_B<\tau_b^+\}}\bigg]&=\e\left[\prod_{i=0}^{G_p}e^{-q\ell_i}e^{-q\ell^*}f(\xi^*_{\ell^*-},\xi^*_{\ell^*})\mathbf{1}_{\{\ell^*<\infty\}}\mathbf{1}_{\{\xi^*_{\ell^*}<-Y\}}\right]\notag\\
&=\mathbb{E}\left[\e\left[e^{-q\ell_1}\right]^{G_p}\right]\e\left[e^{-q\ell^*}f(\xi^*_{\ell^{*}-},\xi^*_{\ell^{*}})\mathbf{1}_{\{\ell^*<\infty\}}\mathbf{1}_{\{\xi^*_{\ell^*}<-Y\}}\right].
\label{putin}
\end{align} 
Recalling that the generating function of the independent geometric random variable $G_p$ satisfies,
\[
F(s)=\frac{\hat{p}}{1-sp},\qquad |s|<\frac{1}{p},
\]
where $\hat{p}=1-p$, we get that  the first term of the right-hand side of the above identity satisfies
\begin{equation}
\begin{split}
\mathbb{E}\left[\e\left[e^{-q\ell_i}\right]^{G_p}\right]
&=\displaystyle\frac{\hat{p}}{1-p\e\left[e^{-q\ell_1}\right]}.
\end{split}
\label{pieces together}
\end{equation}
Moreover, again taking account of the remarks in the previous paragraph and applying the identities (\ref{fi2}) and (\ref{fi1}) in Theorem 1, we also have that
\begin{align*}
\e\left[e^{-q\ell_1}\right]&=\frac{1}{p}\e\left[e^{-q\tau_0^-}\mathbf{1}_{\{\tau_0^-<\tau_b^+,X_{\tau_0^-}>-Y\}}\e_{X_{\tau_0^-}}\left[e^{-q\tau_0^+},\tau_0^+<\tau_{-Y}^-\right]\right]\notag\\
&=\frac{1}{p}\e\left[e^{-q\tau_0^-}\mathbf{1}_{\{\tau_0^-<\tau_b^+, X_{\tau_0^-}>-Y\}}\e_{X_{\tau_0^-}+Y}\left[e^{-q\tau_Y^+},\tau_Y^+<\tau_{0}^-\right]\right]\notag\\
&=\frac{1}{p}\e\left[e^{-q\tau_0^-}\mathbf{1}_{\{\tau_0^-<\tau_b^+,X_{\tau_0^-}>-Y\}}\frac{W^{(q)}(X_{\tau_0^-}+Y)}{W^{(q)}(Y)}\right]\notag\\
&=\frac{W^{(q)}(0+)}{p}\e\left[\int_0^{b}\mathcal{H}^{(\theta)}(b, x)\int_{(-y-Y,-y)}\frac{W^{(q)}(y+x+Y)}{W^{(q)}(Y)}\Pi(\ud x)\ud y\right].
\end{align*}
 Hence putting all the pieces together in (\ref{pieces together}), we obtain
\begin{equation}
\mathbb{E}\left[\e\left[e^{-q\ell_i}\right]^{G_p}\right]=\frac{\hat{p}}{1-\displaystyle W^{(q)}(0+)\e\left[\int_0^{b}\mathcal{H}^{(\theta)}(b, x)\int_{(-y-Y, -y)}\frac{W^{(q)}(y+x+Y)}{W^{(q)}(Y)}\Pi(\ud x)\ud y\right]}.
\label{shitty}
\end{equation}

Next, we compute  the second term in the right-hand side of identity (\ref{putin}).  Recalling that $\ell_*$ is equal in law to $\tau_{-Y}^-\land\tau^+_b$ under $\p(\cdot|E^c)$  then, under the event that $\{\xi^*_{\ell^*}<-Y\}$, we have two  cases to consider. In the first case, the process $X$ jumps below 0  and the undershoot is bigger than $Y$, i.e. $-X_{\tau_0^-}>Y$. In the second case, the process $X$ jumps below $0$ and the undershoot is smaller than $Y$, and stays a positive period of time in the interval $(-Y,0)$  until it jumps below the level $-Y$. Therefore,
\begin{equation}\label{bigshit}
\begin{split}
\hat{p}\e\bigg[e^{-q\ell^*}f(\xi^*_{\ell^{*}-},\xi^*_{\ell^{*}})&\mathbf{1}_{\{\ell^*<\infty, \,\,\xi^*_{\ell^*}<-Y\}}\bigg]=\mathbb{E}\Big[e^{-q\tau_0^-}f(X_{\tau_0^--},X_{\tau_0^-})\mathbf{1}_{\{\tau_0^-<\tau_b^+,\,\, X_{\tau_0^-}<-Y\}}\Big]\\
&+\e\Big[e^{-q\tau_0^-}\mathbf{1}_{\{\tau_0^-<\tau_b^+, X_{\tau_0^-}>-Y\}}\e_{X_{\tau_0^-}}\Big[e^{-q\tau_{-Y}^-}f(X_{\tau_{-Y}^--},X_{\tau_{-Y}^-})\mathbf{1}_{\{\tau_{-Y}^-<\tau_0^+\}}\Big]\Big]\\
\end{split}
\end{equation}
Let us consider the first term  in the right-hand side of (\ref{bigshit}), then using (\ref{fi1}) we obtain 
\begin{align}\label{shitty2}
\mathbb{E}\Big[e^{-q\tau_0^-}&f(X_{\tau_0^--},X_{\tau_0^-})\mathbf{1}_{\{\tau_0^-<\tau_b^+, X_{\tau_0^-}<-Y\}}\Big]\notag\\
&=W^{(q)}(0+)\e\left[\int_0^{b}\mathcal{H}^{(q)}(b,y)\int_{(-\infty, -y-Y)} f(y,x+y)\Pi(\ud x)\ud y\right]=W^{(q)}(0+)\mathcal{A}_f(q,b)
\end{align}
Now for the second term, we use again  (\ref{fi1}) and observe the following
\begin{align}\label{shitty3}
  \frac{1}{W^{(q)}(0+)}&\e\bigg[e^{-q\tau_0^-}\mathbf{1}_{\{\tau_0^-<\tau_b^+, X_{\tau_0^-}>-Y\}}\e_{X_{\tau_0^-}}\Big[e^{-q\tau_{-Y}^-}f(X_{\tau_{-Y}^--},X_{\tau_{-Y}^-})\mathbf{1}_{\{\tau_{-Y}^-<\tau_0^+\}}\Big]\bigg]\notag\\
&\hspace{-1cm}=\e\left[\int_0^{b}\mathcal{H}^{(q)}(b,y)\int_{(-y-Y,-y)}\e_{y+x+Y}\left[e^{-q\tau_{0}^-}f(X_{\tau_{0}^--}-Y,X_{\tau_{0}^-}-Y)\mathbf{1}_{ \{\tau_{0}^-<\tau_Y^+\}}\right]\Pi(\ud x)\ud y\right]\notag\\
&\hspace{-1cm}=\e\Bigg[\int_0^{b}\mathcal{H}^{(q)}(b,y)\int_{(-y-Y, -y)}\int_0^Y\int_{(-\infty,-z)}f(z-Y,u+z-Y)\notag\\
&\hspace{5cm}\times\mathcal{W}^{(q)}(Y, y+x+Y, z) \Pi(\ud u)\ud z\Pi(\ud x)\ud y\Bigg]=\mathcal{B}_f(q,b).
\end{align}
Plugging (\ref{shitty2}) and (\ref{shitty3}) back into (\ref{bigshit}), we get 
\begin{align}\label{shitty4}
\mathbb{E}\Big[e^{-q\ell^*}f(\xi^*_{\ell^{*}-},\xi^*_{\ell^{*}})\mathbf{1}_{\{\ell^*<\infty, \,\,\xi^*_{\ell^*}<-Y\}}\Big]=\frac{W^{(q)}(0+)}{\hat{p}}\Big(\mathcal{A}_f(q,b)+\mathcal{B}_f(q,b)\Big).
\end{align}
Finally putting all  together, we obtain the desired identity.
\end{proof}

\subsection{Proof of Theorem \ref{ubvcase} (unbounded variation case)}
As we mention before, under the assumption of unbounded variation paths a description of the excursion measure  $\mathbf{n}$ is needed.  We recall that our  methodology and computations  are largely based on the recent development in \cite{cem},  where the  excursion theory away from 0 for spectrally negative L\'evy processes is treated and the measure $\mathbf{n}$ is described. For this reason, we use the same notation as in \cite{cem}.

In order to compute (\ref{ott}) in the unbounded variation case,   we split the path of the process at the  left extrema of the excursion away from $0$   where  bankruptcy occurs or has exceeded the level $b>0$. For simplicity, we denote  this time by   $g_{T_B}$, i.e.
\[
g_{T_B}=\sup\{t< T_B: X_t=0\},
\] 
and  we call {\it bankruptcy excursion} for the excursion where  bankruptcy occurs or has exceeded the level $b$. We  also denote by $\Theta_t$, the shift operator at time $t\ge 0$, and 
 by  $T^e_B$ for   the time when bankruptcy occurs in the bankruptcy excursion and observe that  we can write  the time to bankruptcy $T_B$, under the event $\{\tau_b^+>T_B\},$ as follows
\[
T_B=g_{T_B}+T_B\circ\Theta_{g_{T_B}}=g_{T_B}+T^e_B,
\]
implying that (\ref{ott}) satisfies
\begin{equation*}
\begin{split}
\mathbb{E}&\left[e^{-q T_B}f(X_{T_B-},X_{T_B})\mathbf{1}_{\{T_B<\tau_b^+\}}\right]=\e\left[e^{-q g_{T_B}}\left(e^{-q T^e_B}f( X_{T^e_B-}, X_{T^e_B})\mathbf{1}_{\left\{\tau_{b}^+\circ\Theta_{ g_{T_B}}>T_B\right\}}\right)\right],
\end{split}
\end{equation*}
where $\tau_b^{+}\circ\Theta_{g_{T_B}}$ denotes the time when the bankruptcy excursion passes above $b$ for the first time.

Now, we define the events $E_B=\{\textrm{Bankruptcy occurs}\}$ and $E=E_B\cup\{\tau_b^+<\zeta\}$. A simple application of the so-called Master's formula  at time $g_{T_B}$ (see for instance excursions straddling a terminal time in Chapter XII in Revuz-Yor \cite{RY})  implies  
\begin{align}\label{eqwexc}
\mathbb{E}\left[e^{-q T_B}f(X_{T_B-},X_{T_B})\mathbf{1}_{\{T_B<\tau_b^+\}}\right]=\e\Big[e^{-q g_{T_{B}}}\Big]\mathbf{n}\left(e^{-qT_B^e}f(X_{T_B^e-}, X_{T_B^e}) \mathbf{1}_{\{\tau_{b}^+>\zeta\}}\Big | E\right).
\end{align}
In order to compute the first expectation of the right hand-side of the above identity, we first recall some facts of excursion theory.  Let $(e_{t}, t\geq 0)$ be the point process of excursions away from $0$ of $X$ defined as in (\ref{excdef}) and $V:=\inf\{t>0: e_{t}\in E\}.$ It is well known that $(e_{t}, t<V)$ is independent of $(V,e_{V})$ (see for instance Proposition 0.2 in  \cite{B}). The former is a Poisson point process with characteristic measure $\mathbf{n}(\cdot \cap E^c)$ and ${V}$ follows an exponential distribution with parameter $\mathbf{n}(E).$ Moreover, we have that $g_{T_B}=\sum_{s<V}\zeta(e_{s})$, where $\zeta(e_{s})$ denotes the lifetime of the excursion $e_{s}$.  Therefore,  the exponential formula  for Poisson point processes (see for instance Section 0.5 in \cite{B} or Proposition 1.12 in Chapter XII in \cite{RY}) and the independence between $(e_{t}, t<V)$ and  $(V,e_{V})$ implies
\[
\begin{split}
\e\Big[e^{-q g_{T_A}}\Big]&=\e\left[\exp\left\{-q \sum_{s<V}\zeta(e_{s})\right\}\right]\\
&=\mathbf{n}(E)\int_0^\infty e^{-s(\mathbf{n}(E)+\mathbf{n}(1-e^{-q\zeta}, E^c))}\ud s\\
&=\frac{\mathbf{n}(E_B\cup\{\tau_b^+<\zeta\})}{\mathbf{n}(E_B\cup\{\tau_b^+<\zeta\})+\mathbf{n}\left(\mathbf{e}_q<\zeta, E^c_B,\tau_b^+>\zeta\right)}\\
&=\frac{\mathbf{n}(E_B\cup\{\tau_b^+<\zeta\})}{\mathbf{n}(\{\mathbf{e}_q<\zeta\}\cup\{\tau_b^+<\zeta\})+\mathbf{n}\left(\mathbf{e}_q>\zeta, E_B,\tau_b^+>\zeta\right)}.\\
\end{split}
\]
In order to simplify the above expression, we recall that  $L$ denotes the local time at zero of the process $X$ and  observe that $L_{\tau^+_b\land \mathbf{e}_q}$ is the first time where the Poisson point process $(e_{t}, t\geq 0)$ enters the set $\{\epsilon: h(\epsilon)>b \textrm{ or } \zeta(\epsilon)>\mathbf{e}_q\}$, where $h$ denotes the height of the excursion. In other words,  $L_{\tau^+_b\land \mathbf{e}_q}$ is the first time where a excursion goes above the level $b$ or its length is bigger than $\mathbf{e}_q$. Again using Proposition 0.2 in  \cite{B}, we deduce that  $L_{\tau^+_b\land \mathbf{e}_q}$ is exponentially distributed with parameter
\begin{equation}
\mathbf{n}\Big(\{\zeta>\mathbf{e}_{q}\}\cup \{\tau_{b}^+<\zeta\}\Big)=\e\Big[L_{\tau_{b}^+\wedge\mathbf{e}_q}\Big]^{-1}.\notag
\end{equation}
On the other hand, using  Fubinni's Theorem,   Lemma V.11 in \cite{B}, (\ref{resolvent-density}) and   (\ref{resolvent-div}),  we deduce
\begin{equation}\label{localtb}
\e\Big[L_{\tau_{b}^+\wedge\mathbf{e}_q}\Big]=\mathbb{E}\left[\int_0^{\tau_b^+}e^{-qs}\ud L_s\right]=u^q(0)-\frac{u^q(b)u^q(-b)}{u^q(0)}=e^{-\Phi(q)b}W^{(q)}(b).
\end{equation}
Next, using that $\mathbf{e}_q$ is an independent exponential r.v. with parameter $q$, we have 
\begin{equation}
\mathbf{n}\left(\mathbf{e}_q>\zeta, \tau_{b}^+>\zeta,E_B\right)=\mathbf{n}\left(e^{-q\zeta}, \tau_{b}^+>\zeta, E_B\right).\notag
\end{equation}
Hence putting all the pieces together, we deduce that (\ref{eqwexc}) can be written as follows,
\begin{equation}\label{eqwexc1}
\mathbb{E}\left[e^{-q T_B}f(X_{T_B-},X_{T_B})\mathbf{1}_{\{T_B<\tau_b^+\}}\right]=\frac{\mathbf{n}\left(e^{-qT_B^e}f(X_{T_B^e-}, X_{T_B^e}) \mathbf{1}_{\{\tau_{b}^+>\zeta\}}\right)}{\displaystyle \frac{e^{\Phi(q)b}}{W^{(q)}(b)}+\mathbf{n}\left(e^{-q\zeta}, \tau_{b}^+>\zeta, E_B\right)}.
\end{equation}
This implies that  our result can be computed if  we know how to describe the random variables 
\begin{equation}\label{rvundern}
(T_B^e, X_{T_B^e-}, X_{T_B^e}),
\end{equation}
under the excursion measure $\mathbf{n}.$  

Since the process $X$ is spectrally negative and possesses paths of unbounded variation then it is regular upwards and downwards, that is to say that $X$ enters the open upper and lower half line a.s. immediately  from 0, i.e. $\p(\tau_0^-=0)=\p(\tau_0^+=0)=1$ (see for instance Chapter 8 in \cite{K}). This implies that a typical excursion  where bankruptcy occurs has the following behaviour: it starts at zero and stays above zero until the first time where the excursion process crosses below zero continuously or by a jump. If the process hits zero continuously, the excursion ends, { and bankruptcy occurs when the length of this positive excursion is greater than an exponential random variable $\mathbf{e}_{\lambda}$ with rate $\lambda>0$ (i)}.  If the process crosses below  zero  by a jump, we have three possibilities, either the process jumps to a level which is below the value of the mark of the excursion (ii), or it jumps  above the value of the mark and then the process continues up to the time where it passes below the value of the mark continuously (iii), or by a jump (iv). In the case when the process $X$ has a non-zero Brownian component, we have an additional behaviour:  the excursion starts at zero and stays below zero until the first time that the process passes below the level of the mark either continuously or by a jump (v). 


The previous description suggest that we  should describe and excursion over the following five disjoint sets: 
\begin{itemize}
\item[(i)] $\displaystyle \{X_{\zeta-}=0, \mathbf{e}_{\lambda}<\zeta\},$ on which we have 
$$(T_B^e, X_{T_B^e-}, X_{T_B^e})=(\zeta, 0, 0);$$
\item[(ii)] $\displaystyle \{X_{\tau_{0}^--}>0, X_{\tau_{0}^-}\leq -Y,\},$ on which we have  
$$(T_B^e, X_{T_B^e-}, X_{T_B^e})=\Big(\tau_0^-, X_{\tau_0^--}, X_{\tau_0^-}\Big);$$
\item[(iii)] $\displaystyle \{X_{\tau_{0}^--}>0, 0>X_{\tau_{0}^-}>-Y, X_{\tau^{-}_{-Y}-}= -Y=X_{\tau^{-}_{-Y}}\},$ on which we have
$$(T_B^e, X_{T_B^e-}, X_{T_B^e})=\Big(\tau_{-Y}^-, -Y, -Y\Big);$$
\item[(iv)] $\displaystyle \{X_{\tau_{0}^--}>0, 0>X_{\tau_{0}^-}>-Y, X_{T_{B}-}> -Y>X_{T_{B}}\}$, on which we have
$$(T_B^e, X_{T_B^e-}, X_{T_B^e})=\Big(\tau_{-Y}^-, X_{\tau_{-Y}^--}, X_{\tau_{-Y}^-}\Big);$$
\item[(v)]  $ \{\sup_{s<\zeta}X_s\leq 0, X_{T_{B}}\leq { -Y}\},$ on which we have 
$$(T_B^e, X_{T_B^e-}, X_{T_B^e})=( \tau_{-Y}^-, X_{\tau_{-Y}^-}, X_{\tau_{-Y}^-}).$$
\end{itemize}
So, in order to conclude we will give explicit identities for the quantities involved in (\ref{eqwexc1}) under the five cases discussed above.

Our first Lemma computes the potential of the process $X$ under the excursion measure $\mathbf{n}$ until the first time that $X$ reaches the level $b>0$. Recall that for  $\theta, b>0$ and $x\in (-\infty,b)$,  
\[
\mathcal{H}^{(\theta)}(b, x)=\frac{W^{(\theta)}(b-x)}{W^{(\theta)}(b)}-e^{\Phi(\theta)b}\frac{W^{(\theta)}(-x)}{W^{(\theta)}(b)}.
\]
\begin{lemma}
For $q,b>0$ and $f:\mathbb{R}\to \mathbb{R}$ measurable it follows that
\begin{equation}\label{potentialkilled}
\mathbf{n}\left(\int_0^{\zeta\wedge \tau_{b}^+}e^{-qu}f(X_u)\ud u\right)=\int_{-\infty}^b f(y)\mathcal{H}^{(q)}(b, y)\ud y,
\end{equation}
\end{lemma}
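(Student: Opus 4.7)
The plan is to prove this by relating the excursion-measure integral to an ordinary expectation via the master formula of It\^{o}'s excursion theory, and then identifying this expectation with the known $q$-resolvent of $X$ killed at $\tau_b^+$.

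First, I would replace the discount factor by an independent killing. Let $\mathbf{e}_q$ be an exponential random variable of rate $q$, independent of $X$. By Fubini,
\[
\mathbb{E}\left[\int_0^{\tau_b^+}e^{-qu}f(X_u)\ud u\right]=\mathbb{E}\left[\int_0^{\tau_b^+\wedge\mathbf{e}_q}f(X_u)\ud u\right],
\]
and similarly
\[
\mathbf{n}\left(\int_0^{\zeta\wedge\tau_b^+}e^{-qu}f(X_u)\ud u\right)=\mathbf{n}\left(\int_0^{\zeta\wedge\tau_b^+\wedge\mathbf{e}_q}f(X_u)\ud u\right).
\]
The point of this reformulation is that $\tau_b^+\wedge\mathbf{e}_q$ is the first time the Poisson point process of excursions enters the set $\{h(\epsilon)>b\}\cup\{\zeta(\epsilon)>\mathbf{e}_q\}$ introduced just before (\ref{localtb}).

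Next I would apply the compensation formula for the Poisson point process of excursions (decomposing the path up to $\tau_b^+\wedge\mathbf{e}_q$ into the sum of contributions coming from each excursion, the final one being possibly truncated). Combining this with Proposition~0.2 of \cite{B} on the independence between the killing excursion and the earlier ones, together with the memoryless property of $\mathbf{e}_q$, yields the factorisation
\[
\mathbb{E}\left[\int_0^{\tau_b^+\wedge\mathbf{e}_q}f(X_u)\ud u\right]=\mathbb{E}\left[L_{\tau_b^+\wedge\mathbf{e}_q}\right]\cdot\mathbf{n}\left(\int_0^{\zeta\wedge\tau_b^+\wedge\mathbf{e}_q}f(X_u)\ud u\right).
\]
The expected local time on the right is precisely the quantity computed in (\ref{localtb}), namely $e^{-\Phi(q)b}W^{(q)}(b)$.

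It remains to compute the left-hand side. This is a standard scale-function fluctuation identity: using the resolvent density for $X$ killed at $\tau_b^+$ (Theorem~8.7 of \cite{K}, obtained by combining (\ref{fi2}) with the strong Markov property at $\tau_b^+$) we obtain
\[
\mathbb{E}\left[\int_0^{\tau_b^+}e^{-qu}f(X_u)\ud u\right]=\int_{-\infty}^{b}f(y)\left[e^{-\Phi(q)b}W^{(q)}(b-y)-W^{(q)}(-y)\right]\ud y.
\]
Dividing by $e^{-\Phi(q)b}W^{(q)}(b)$ and recognising the bracket as $W^{(q)}(b)\cdot\mathcal{H}^{(q)}(b,y)e^{-\Phi(q)b}$ gives exactly (\ref{potentialkilled}).

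The only delicate point in this plan is justifying the factorisation in the second step: one must handle the truncation of the last excursion by $\mathbf{e}_q$ and the killing at $\tau_b^+$ simultaneously. The neat way to do so is to condition on the independent mark $\mathbf{e}_q$, note that within each excursion $e_s$ the event $\{\mathbf{e}_q>L_s^{-1,-}+u\}$ has conditional probability $e^{-q(L_s^{-1,-}+u)}$, and then apply the compensation formula with the predictable integrand $e^{-qL_s^{-1,-}}\mathbf{1}_{\{L_s^{-1,-}<\mathbf{e}_q\}}\int_0^{\zeta\wedge\tau_b^+}e^{-qu}f(\epsilon(u))\ud u$; the expectation of the prefactor yields $\mathbb{E}[L_{\tau_b^+\wedge\mathbf{e}_q}]$, producing the claimed factorisation.
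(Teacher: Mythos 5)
Your proposal is correct and follows essentially the same route as the paper: both arguments factor $\mathbb{E}[\int_0^{\tau_b^+}e^{-qu}f(X_u)\ud u]$ via the compensation formula into $\mathbb{E}[\int_0^{\tau_b^+}e^{-qs}\ud L_s]$ times the excursion quantity, then combine the local-time identity (\ref{localtb}) with the known resolvent density of $X$ killed at $\tau_b^+$ and solve for the $\mathbf{n}$-integral. The only cosmetic difference is that you package the discounting as an independent exponential killing $\mathbf{e}_q$ (handled correctly via memorylessness and a predictable integrand), whereas the paper applies the compensation formula directly to the integrand $e^{-qg}(\int_0^{\zeta\wedge\tau_b^+}e^{-qt}f(X_t)\ud t)\circ\Theta_g$, after first discarding the Lebesgue-null contribution of the zero set $\{t:X_t=0\}$.
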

\begin{proof}
Let $\mathcal{G}$ denote the set of left-end points of the excursion intervals from the state $0,$ and for $g\in\mathcal{G},$ we denote by $d_{g}$ the right-end point of the excursion interval starting at $g,$ that is $$d_{g}=\inf\{s>g: X_{s}=0\}.$$  We first  recall from   Theorem 2.7 in \cite{KKR} the following identity
\begin{equation}\label{eqinKKR}
\mathbb{E}\left[\int_{0}^{\tau^+_b}e^{-qt}f(X_t)\ud t\right]=\int_{-\infty}^bf(y)\left(e^{-\Phi(q)b}W^{(q)}(b-y)-W^{(q)}(-y)\right)\ud y,
\end{equation}
and, in particular, we observe
\[
\mathbb{E}\left[\int_{0}^{\tau^+_b}e^{-qt}f(X_t)\mathbf{1}_{\{X_{t}=0\}}\ud t\right]=\int_{-\infty}^bf(y)\mathbf{1}_{\{y=0\}}\left(e^{-\Phi(q)b}W^{(q)}(b-y)-W^{(q)}(-y)\right)\ud y=0.
\]
On the one hand,  by splitting   the interval $[0,\tau^+_b]$ into the excursion intervals and applying the Master's  formula or compensation formula (see for instance Section 0.5 in \cite{B} or Proposition 1.10 in Chapter XII in \cite{RY}) from excursion theory, we get 
\begin{align}
\mathbb{E}\bigg[\int_{0}^{\tau^+_b}e^{-qt}f(X_t)\ud t\bigg]&= \mathbb{E}\left[\int_{0}^{\tau^+_b}e^{-qt}f(X_t)\mathbf{1}_{\{X_{t}=0\}}\ud t\right]+\mathbb{E}\left[\int_{0}^{\tau^+_b}e^{-qt}f(X_t)\mathbf{1}_{\{X_{t}\neq 0\}}\ud t\right]\notag\\
&=\mathbb{E}\left[\sum_{g\in \mathcal{G}}\mathbf{1}_{\{g<\tau^+_b\}}\int_{g}^{d_{g}}e^{-qt}f(X_t)\ud t\right]\notag\\
&=\mathbb{E}\left[\sum_g \mathbf{1}_{\{g<\tau^+_b\}} e^{-qg}\left(\int_0^{\infty}e^{-qt}f(X_t)\mathbf{1}_{\{t<\zeta\wedge \tau^+_b\}}\ud t\right)\circ{}{\Theta}_{g}\right]\notag\\
&=\mathbb{E}\left[\int_0^{\tau^+_b}e^{-qs}dL_s\right]\mathbf{n}\left(\int_0^{\tau^+_b\wedge \zeta}e^{-qt}f(X_t)\ud t\right)\notag\\
&=e^{-\Phi(q)b}W^{(q)}(b)\mathbf{n}\left(\int_0^{\tau^+_b\wedge \zeta}e^{-qt}f(X_t)\ud t\right),\notag
\end{align}
where in the last equality we  used identity (\ref{localtb}), and   we recall that $\Theta_t$  denotes the shift operator at time $t$. This clearly implies the result.
\end{proof}

The information about the first event is given in the following Lemma. In particular, it provides the function $\mathcal{J}(\lambda, q, b) $ defined  in (\ref{event1}).
\begin{lemma}For $\lambda, q,b>0$, we have
\begin{equation*}
\begin{split}
\mathbf{n}&\left(e^{-q\zeta}f(X_{\zeta-},X_{\zeta})\mathbf{1}_{\{\mathbf{e}_{\lambda}<\zeta\}\cap E_1}\right)=\frac{\sigma^2}{2} f(0,0)\int_{-\infty}^b\left((\lambda+q)\mathcal{H}^{(\lambda +q)}(b,x)-q\mathcal{H}^{(q)}(b,x)\right)\mathcal{O}(b,x)\ud x,\notag\\
\end{split}
\end{equation*}
where 
\[
E_1=\left\{\ \inf_{0<s<\zeta}X_{s}=0,\ \sup_{0<s<\zeta}X_s<b,\ X_{\zeta-}=0\right\}.
\]

\end{lemma}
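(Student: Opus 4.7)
The plan is to exploit the independence of $\mathbf{e}_{\lambda}$ from $X$, to convert $e^{-q\zeta}\mathbf{1}_{\{\mathbf{e}_{\lambda}<\zeta\}}$ into a time-integral, and then to feed the result into the potential formula \eqref{potentialkilled}. On $E_{1}\cap\{\mathbf{e}_{\lambda}<\zeta\}$ one has $X_{\zeta-}=X_{\zeta}=0$, so $f(X_{\zeta-},X_{\zeta})=f(0,0)$ may be pulled outside; integrating the independent exponential $\mathbf{e}_{\lambda}$ against $\mathbf{1}_{\{\mathbf{e}_{\lambda}<\zeta\}}$ produces $1-e^{-\lambda\zeta}$ and reduces the task to computing $\mathbf{n}\bigl(e^{-q\zeta}(1-e^{-\lambda\zeta})\mathbf{1}_{E_{1}}\bigr)$. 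The key rearrangement is the elementary identity
\begin{equation*}
e^{-q\zeta}(1-e^{-\lambda\zeta})=e^{-q\zeta}-e^{-(\lambda+q)\zeta}=\int_{0}^{\zeta}\bigl[(\lambda+q)e^{-(\lambda+q)t}-qe^{-qt}\bigr]\ud t,
\end{equation*}
which exhibits the variable $t$ against which the potential formula can be applied.

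Next, by Fubini and the strong Markov property of $\mathbf{n}$ at time $t$ one is led to evaluate $\mathbf{n}\bigl(\mathbf{1}_{\{t<\zeta\}}\mathbf{1}_{E_{1}}\bigr)$. Given $X_{t}=x$, the post-$t$ piece of the excursion is distributed as $(X_{s})_{s\ge 0}$ under $\p_{x}$ run up to its first return to $0$, and the future part of $E_{1}$ asks precisely that this continuation creep down to $0$ before reaching $b$. Theorem \ref{fi}(ii), applied with $\theta=0$ and $f(0)g(0)=1$ so that only the creeping contribution survives, evaluates this probability as $\tfrac{\sigma^{2}}{2}\mathcal{O}(b,x)$. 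For the past part I would use that, since $X$ is spectrally negative, an excursion which is strictly positive at some interior time $t<\zeta$ must have stayed strictly positive throughout $(0,t]$: the only way to return from $(-\infty,0)$ to $(0,\infty)$ inside a single excursion would be a continuous crossing of $0$, and such a crossing would already have terminated the excursion. Combined with $\sup X<b$, the past event reduces to the plain indicator $\mathbf{1}_{\{t<\zeta\wedge\tau_{b}^{+},\,X_{t}>0\}}$.

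Putting these observations together yields
\begin{equation*}
\mathbf{n}\bigl(e^{-q\zeta}(1-e^{-\lambda\zeta})\mathbf{1}_{E_{1}}\bigr)=\frac{\sigma^{2}}{2}\,\mathbf{n}\!\left(\int_{0}^{\zeta\wedge\tau_{b}^{+}}\bigl[(\lambda+q)e^{-(\lambda+q)t}-qe^{-qt}\bigr]\mathbf{1}_{\{X_{t}>0\}}\mathcal{O}(b,X_{t})\,\ud t\right),
\end{equation*}
and applying \eqref{potentialkilled} separately with $\theta=\lambda+q$ and $\theta=q$ to the integrand $h(y)=\mathbf{1}_{\{y>0\}}\mathcal{O}(b,y)$ produces exactly $\tfrac{\sigma^{2}}{2}\mathcal{J}(\lambda,q,b)$; the extension of the integration domain from $(0,b)$ to $(-\infty,b)$ is harmless since $\mathcal{O}(b,y)=0$ for $y<0$. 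Multiplying by $f(0,0)$ gives the lemma. The most delicate point is the joint use of Fubini and the strong Markov property under the $\sigma$-finite excursion measure $\mathbf{n}$, combined with the structural fact that the positive and negative phases of an excursion of a spectrally negative L\'evy process cannot alternate; both are supplied by It\^o's excursion theory and by the description of $\mathbf{n}$ developed in \cite{cem}.
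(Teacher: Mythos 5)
Your proposal is correct and follows essentially the same route as the paper: integrate out $\mathbf{e}_{\lambda}$ to reduce to $\mathbf{n}\bigl(e^{-q\zeta}(1-e^{-\lambda\zeta})\mathbf{1}_{E_{1}}\bigr)$, rewrite the exponential factor as a time integral, apply the Markov property under $\mathbf{n}$ together with the creeping probability $\p_{x}(\tau_{0}^{-}<\tau_{b}^{+},\,X_{\tau_{0}^{-}}=0)=\tfrac{\sigma^{2}}{2}\mathcal{O}(b,x)$ from Theorem \ref{fi}(ii), and finish with the potential formula \eqref{potentialkilled} at the two rates $\lambda+q$ and $q$. The only cosmetic difference is that you fold the two rates into a single integrand $(\lambda+q)e^{-(\lambda+q)t}-qe^{-qt}$, whereas the paper computes $\mathbf{n}\bigl((1-e^{-\theta\zeta})\mathbf{1}_{E_{1}}\bigr)$ for general $\theta$ and then subtracts.
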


\begin{proof} We first note that a simple application of identity (\ref{fi1}), implies 
$$\p_{x}\left(\tau_0^-<\tau_{b}^+, X_{\tau_0^-}=0\right)=\mathcal{O}(b,x), \qquad x>0.$$
On the other hand,  using  integration by parts,  the Markov property under the excursion measure $\mathbf{n}$ (see  Section 4 in \cite{cem}), the previous identity and Lemma 1,  we deduce for any $\theta\geq0$,
\begin{equation*}
\begin{split}
&\hspace{-3cm}\mathbf{n}\left((1-e^{-\theta \zeta})\mathbf{1}_{E_1}\right)=\theta\, \mathbf{n}\left(\int^{\zeta\wedge \tau_{b}^+}_{0}e^{-\theta s}\,\p_{X_{s}}\left(\tau_0^-<\tau_{b}^+, X_{\tau_0^-}=0\right)\ud s\right)\\
&=\theta\,\frac{\sigma^2}{2} \mathbf{n}\left(\int^{\zeta\wedge \tau_{b}^+}_{0}e^{-\theta s}\mathcal{O}(b, X_s)\ud s\right)\\
&=\theta\frac{\sigma^2}{2} \int_{-\infty}^b\mathcal{O}(b,x)\mathcal{H}^{(\theta)}(b,x)\ud x.
\end{split}
\end{equation*}
Thus the result follows  by noting that under the  event $E_1$, $f(X_{\zeta-},X_{\zeta})=f(0,0)$, and 
\begin{align}
\mathbf{n}&\left(e^{-q\zeta}(1-e^{-\lambda \zeta})\mathbf{1}_{E_1}\right)=\mathbf{n}\left((1-e^{-(\lambda+q) \zeta})\mathbf{1}_{E_1}\right)-\mathbf{n}\left((1-e^{-q \zeta})\mathbf{1}_{E_1}\right).\notag
\end{align}
This completes the proof.
\end{proof}

In order to describe the event in case (ii),  the following Lemma is needed. In particular, we deduce the funtions $\mathcal{A}_f(q,b)$ in (\ref{event21})
 and $\mathcal{D}(q,b)$ in (\ref{event2})
 \begin{lemma}Let  $q, b>0$ and define
\begin{equation}\label{conj2}
E_2=\left\{ \sup_{u<\tau_0^-}X_u<b, \ \tau_0^-<\zeta\right\}.
\end{equation}
Then, we have 
\begin{align}
\mathbf{n}\left(e^{-q \tau_0^-}f(X_{\tau_0^--}, X_{\tau_0^-})\mathbf{1}_{ \{X_{\tau_0^-}<-Y\}\cap E_2}\right)&=\mathbb{E}\left[\int_{0}^{b} \mathcal{H}^{(q)}(b,x)\int_{(-\infty, -Y-x)}f(x, x+y)\Pi(\ud y)\ud x\right],\label{eqlem1} \\
  \mathbf{n}\Big(e^{-q\zeta}\mathbf{1}_{ \{X_{\tau_0^-}<-Y\}\cap E_2}\Big)&=\mathbb{E}\left[\int_{0}^{b} \mathcal{H}^{(q)}(b,x)\int_{(-\infty, -Y-x)}e^{\Phi(q)(x+y)}\Pi(\ud y)\ud x\right].\label{eqlem2}
\end{align}
\end{lemma}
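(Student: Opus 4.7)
The plan is to establish both identities via the compensation formula (Lévy system) for the jumps of $X$ under $\mathbf{n}$, deriving (\ref{eqlem1}) directly and then reducing (\ref{eqlem2}) to it through the strong Markov property of $\mathbf{n}$ at $\tau_0^-$.

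For (\ref{eqlem1}), I would first observe that on $E_2\cap\{X_{\tau_0^-}<-Y\}$ the jump at $\tau_0^-$ sends $X$ from a pre-jump value in $[0,b)$ directly to a value below $-Y$, and that this is the unique downward jump of the excursion satisfying both constraints. Accordingly, the left-hand side of (\ref{eqlem1}) may be rewritten as
$$\mathbf{n}\left(\sum_{s<\zeta,\,\Delta X_s<0} e^{-qs} f(X_{s-},X_s)\,\mathbf{1}_{\{X_r\in[0,b)\;\forall\,r<s,\,X_s<-Y\}}\right).$$
The compensation formula under $\mathbf{n}$ (valid in the framework of \cite{cem}) then converts the sum into
$$\mathbf{n}\left(\int_0^{\tau_0^-\wedge\tau_b^+\wedge\zeta} e^{-qs}\int_{(-\infty,-Y-X_s)} f(X_s,X_s+u)\,\Pi(\ud u)\,\ud s\right).$$

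The next step is to invoke the path-structural fact that, under $\mathbf{n}$, once the excursion crosses strictly below zero at $\tau_0^-$ it remains strictly negative until $\zeta$: any return to zero from below is $\zeta$ itself, and the absence of positive jumps forbids an intermediate upcrossing. The integrand therefore vanishes on $(\tau_0^-,\zeta)$, so the upper limit $\tau_0^-\wedge\tau_b^+\wedge\zeta$ may be relaxed to $\zeta\wedge\tau_b^+$. Applying Lemma~1 with $g(x)=\mathbf{1}_{\{x\in[0,b)\}}\int_{(-\infty,-Y-x)} f(x,x+u)\,\Pi(\ud u)$ evaluates the resulting potential, and a Fubini step, permissible by independence of $Y$ from $X$, moves the expectation in $Y$ outside to produce the right-hand side of (\ref{eqlem1}).

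For (\ref{eqlem2}), I would apply the strong Markov property of $\mathbf{n}$ at the stopping time $\tau_0^-$. On $\{\tau_0^-<\zeta\}$ the residual lifetime $\zeta-\tau_0^-$ equals $\tau_0^+\circ\Theta_{\tau_0^-}$, and (\ref{fi2}) applied at $x=X_{\tau_0^-}<0$ gives $\mathbb{E}_{X_{\tau_0^-}}[e^{-q\tau_0^+}]=e^{\Phi(q)X_{\tau_0^-}}$. Hence
$$\mathbf{n}\left(e^{-q\zeta}\mathbf{1}_{\{X_{\tau_0^-}<-Y\}\cap E_2}\right)=\mathbf{n}\left(e^{-q\tau_0^-}e^{\Phi(q)X_{\tau_0^-}}\mathbf{1}_{\{X_{\tau_0^-}<-Y\}\cap E_2}\right),$$
which is a special instance of (\ref{eqlem1}) with the penalty $f(x,y)=e^{\Phi(q)y}$ and therefore equals the announced right-hand side.

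The main obstacle I anticipate is the careful handling of the compensation formula and the strong Markov property under the $\sigma$-finite excursion measure $\mathbf{n}$, both of which rest on the description of $\mathbf{n}$ developed in \cite{cem}. A secondary but equally essential subtlety is the path-structural observation that after $\tau_0^-$ the excursion cannot re-enter $[0,b)$; this is precisely what licenses the passage from $\tau_0^-\wedge\tau_b^+\wedge\zeta$ to $\zeta\wedge\tau_b^+$ and so brings the calculation within the reach of Lemma~1.
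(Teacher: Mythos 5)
Your proposal is correct and follows essentially the same route as the paper: the compensation (Master's) formula for the jumps of $X$ under $\mathbf{n}$ combined with Lemma~1 for \eqref{eqlem1}, using the fact that the excursion cannot re-enter $(0,b)$ after $\tau_0^-$, and the strong Markov property under $\mathbf{n}$ at $\tau_0^-$ together with \eqref{fi2} to reduce \eqref{eqlem2} to \eqref{eqlem1} with $f(x,y)=e^{\Phi(q)y}$. The only cosmetic difference is that you restrict the time integral to $[0,\tau_0^-\wedge\tau_b^+\wedge\zeta]$ before relaxing it, whereas the paper keeps the indicator $\{\inf_{u<s}X_u>0\}$ and replaces it by $\{X_{s-}>0\}$; these are the same observation.
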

\begin{proof}

We first prove identity (\ref{eqlem1}). Recall that $\Delta X_s$, denotes the jump of $X$ at time $s$, i.e. $\Delta X_s=X_s-X_{s-}$.
Since under $\mathbf{n}$,  the jumps of $X$  constitute a Poisson point process (see  Section 4 in \cite{cem}), an application of the Master's formula under $\mathbf{n}$ (see identity (14) in \cite{cem}) and Lemma 1 allow us to deduce, 
\begin{equation}\label{eqlem3}
\begin{split}
\mathbf{n}\Bigg(&e^{-q \tau_0^-}f(X_{\tau_0^--}, X_{\tau_0^-})\mathbf{1}_{ \{X_{\tau_0^-}<-Y\}\cap E_2}\Bigg)\\
&=\mathbf{n}\left(\sum_{0<s<\zeta}e^{-qs} f(X_{s-}, X_{s-}+\Delta X_{s})\mathbf{1}_{\left\{\inf_{u<s}X_{u}>0,\ \sup_{u<\tau_0^-}X_u<b, \ X_{s}<-Y\right\}}\right)\\
&=\mathbf{n}\left(\int^{\zeta}_{0}\ud s\,e^{-qs}\mathbf{1}_{\left\{\inf_{u<s}X_{u}>0, \ \sup_{u<\tau_0^-}X_u<b\right\}}\int_{(-\infty,0)}\Pi(\ud y)f(X_{s-}, X_{s-}+y)\mathbf{1}_{\{X_{s-}+y<-Y\}}\right)\\
&=\mathbf{n}\left(\int^{\zeta\wedge \tau_{b}^+}_{0}\ud s\, e^{-qs}\mathbf{1}_{\left\{X_{s-}>0\right\}}\int_{(-\infty,0)}\Pi(\ud y)f(X_{s-}, X_{s-}+y)1_{\{X_{s-}+y<-Y\}}\right)\\
&=\mathbb{E}\left[\int_{0}^{b} \mathcal{H}^{(q)}(b,x) \int_{{(-\infty, -Y-x)}}f(x, x+y)\Pi(\ud y)\ud x\right],\
\end{split}
\end{equation}
where in the third equality, we  used the fact that we can replace, under the excursion measure $\mathbf{n}$, the event $\left\{\inf_{u<s}X_{u}>0\right\}$ by $\left\{X_{s-}>0\right\}$ since the absence of positive jumps implies that once an excursion is below $0$ it ends continuosly.

We now prove identity (\ref{eqlem2}). We first apply the  strong Markov property at time $\tau_0^-$ under $\mathbf{n}$ and observe that for any $a>0$
\[
\mathbf{n}\Big(e^{-q\zeta}1_{ \{X_{\tau_0^-}<-Y\}\cap E_2}\Big)=\mathbf{n}\left(e^{-q\tau_0^-} \mathbf{1}_{ \{X_{\tau_0^-}<-Y\}\cap E_2} \e_{X_{\tau_0^{-}}}\Big[e^{-q\tau_0^+}\mathbf{1}_{\{\tau_0^+<\infty\}}\Big]\right).
\]
Hence using identities  (\ref{eqlem3}) and (\ref{fi2}), we deduce
\begin{align*}
\mathbf{n}\Big(e^{-q\zeta}1_{  \{X_{\tau_0^-}<-Y\}\cap E_2}\Big)&=\mathbb{E}\left[\int_{0}^{b} \mathcal{H}^{(q)}(b,x) \int_{{(-\infty, -Y-x)}}\e_{x+y}\Big[e^{-q\tau_0^+}\mathbf{1}_{\{\tau_0^+<\infty\}}\Big]\Pi(\ud y)\ud x\right]\\
&=\mathbb{E}\left[\int_{0}^{b} \mathcal{H}^{(q)}(b,x) \int_{{(-\infty, -Y-x)}}e^{\Phi(q)(x+y)}\Pi(\ud y)\ud x\right].
\end{align*}
The proof is now completed.
\end{proof}
Next Lemma describes  the cases  (iii) and (iv). In particular, we obtain the functions $\mathcal{B}_f(q,b), \mathcal{U}_f(q,b)$ and $\mathcal{E}(q,b)$ in (\ref{eqb}), (\ref{equ}) and (\ref{eqe}) respectively.
\begin{lemma} $i)$ For $ q, b>0$, we have
\begin{align}
&\hspace{-1cm}\mathbf{n}\left(e^{-q \tau_{-Y}^-}f(X_{\tau_{-Y}^--}, X_{\tau_{-Y}^-},)\mathbf{1}_{\{\tau_0^-< \tau_{-Y}^-<\zeta<\tau_b^+\}}\right)\notag\\
&\hspace{4cm}=\e\left[\int_{0}^{b} \mathcal{H}^{(q)}(b,x) \int_{(-x-Y,-x)}H_f(Y,x,y)\Pi(\ud y)\ud x\right],
\end{align}
where
\[
\begin{split}
H_f(a,x,y)&=\frac{\sigma^{2}}{2}\mathcal{O}^{(q)}(a,a+x+y)f(-a, -a)\\
&\hspace{1cm}+\int_{0}^a\mathcal{W}^{(\lambda)}(a,a+x+y,z)\int_{(-\infty,-z)}f(z-a, z+w-a)\Pi(\ud w)\ud z.
\end{split}
\]
$ii)$ For $q, b>0$, we have
\begin{align*}
\mathbf{n}\left(e^{-q\zeta}\mathbf{1}_{\left\{\tau_0^-<\tau_{-Y}^-<\zeta<\tau_b^+\right\}}\right)=\mathbb{E}\left[\int^{\infty}_{0} \mathcal{H}^{(q)}(b,x)\int_{(-x-Y, -x)}\left({e^{\Phi(q)(x+y+Y)}}-\frac{W^{(q)}(x+y+Y)}{W^{(q)}(a)}\right)\Pi(\ud y)\ud x\right].\notag\\
\end{align*}

\end{lemma}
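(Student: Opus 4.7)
Both identities are handled by the same two-step strategy: split the excursion at its first downward crossing $\tau_0^-$ and use the strong Markov property under $\mathbf{n}$, then translate spatially by $Y$ to apply the exit identity (\ref{fi1}) of Theorem \ref{fi}(ii). Exactly as in the proof of Lemma 3, the Master's formula for the jumps of $X$ under $\mathbf{n}$, combined with Lemma 1, yields the ``handoff'' formula
\begin{equation*}
\mathbf{n}\Big(e^{-q\tau_0^-}F(X_{\tau_0^-})\mathbf{1}_{\{\tau_0^-<\tau_b^+,\,-Y<X_{\tau_0^-}<0\}}\Big)
=\e\left[\int_0^{b}\mathcal{H}^{(q)}(b,x)\int_{(-x-Y,-x)}F(x+y)\Pi(\ud y)\ud x\right],
\end{equation*}
valid for any measurable $F\geq0$. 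The proof therefore reduces to identifying the correct post-$\tau_0^-$ continuation functional $F$ in each case and evaluating it via standard fluctuation identities.

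\textbf{Part (i).} The strong Markov property at $\tau_0^-$ under $\mathbf{n}$ gives
$F(x_0)=\e_{x_0}\!\left[e^{-q\tau_{-Y}^-}f(X_{\tau_{-Y}^--},X_{\tau_{-Y}^-})\mathbf{1}_{\{\tau_{-Y}^-<\tau_0^+\}}\right]$ for $x_0\in(-Y,0)$; here the original event $\{\tau_{-Y}^-<\zeta\}$ is re-expressed as $\{\tau_{-Y}^-<\tau_0^+\}$ because, $X$ having no positive jumps, the excursion after $\tau_0^-$ can only terminate by creeping back up to $0$. Shifting the spatial coordinate by $+Y$ transforms $F(x_0)$ into
$\e_{x_0+Y}\!\left[e^{-q\tau_0^-}f(X_{\tau_0^--}-Y,X_{\tau_0^-}-Y)\mathbf{1}_{\{\tau_0^-<\tau_Y^+\}}\right]$,
to which I apply (\ref{fi1}) with $a=Y$, starting point $x_0+Y$, and the joint penalty $h(s,t)=f(s-Y,t-Y)$ (the extension of (\ref{fi1}) from product penalties $f(X_{\tau_0^-})g(X_{\tau_0^--})$ to general $h(X_{\tau_0^--},X_{\tau_0^-})$ is immediate by bilinearity and a monotone class argument). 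The Brownian creeping summand of (\ref{fi1}) produces $\tfrac{\sigma^2}{2}f(-Y,-Y)\mathcal{O}^{(q)}(Y,x_0+Y)$, and the jump summand produces the $\int_0^Y\int_{(-\infty,-z)}\cdots\,\mathcal{W}^{(q)}(Y,x_0+Y,z)\,\Pi(\ud w)\ud z$ term. Setting $x_0=x+y$ recovers $H_f(Y,x,y)$ and completes the proof (with the caveat that the $\mathcal{W}^{(\lambda)}$ in the printed statement should be read as $\mathcal{W}^{(q)}$).

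\textbf{Part (ii).} The same scheme applies with continuation functional
$G(x_0)=\e_{x_0}\!\left[e^{-q\tau_0^+}\mathbf{1}_{\{\tau_{-Y}^-<\tau_0^+<\infty\}}\right]$, where I have again used spectral negativity: after $\tau_{-Y}^-$ the path sits strictly below $-Y$, so to exceed $b$ it would first have to hit $0$ (ending the excursion), hence $\{\zeta<\tau_b^+\}$ is automatic there. Splitting
$G(x_0)=\e_{x_0}[e^{-q\tau_0^+}\mathbf{1}_{\{\tau_0^+<\infty\}}]-\e_{x_0}[e^{-q\tau_0^+}\mathbf{1}_{\{\tau_0^+<\tau_{-Y}^-\}}]$
and applying (\ref{fi2}) to each piece (the second after a $+Y$ shift, turning it into a two-sided exit problem on $(0,Y)$) gives $G(x_0)=e^{\Phi(q)x_0}-W^{(q)}(x_0+Y)/W^{(q)}(Y)$. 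Substituting $x_0=x+y$ into the handoff formula reproduces exactly $\mathcal{E}(q,b)$ from (\ref{eqe}); the printed upper limit $\infty$, the exponent $\Phi(q)(x+y+Y)$, and the denominator $W^{(q)}(a)$ in the statement should be read as $b$, $\Phi(q)(x+y)$, and $W^{(q)}(Y)$ respectively.

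\textbf{Main obstacle.} No single computation is difficult once the event is rewritten: the subtle points are essentially bookkeeping. The key moves are (a) reducing the original event involving $\zeta$ and $\tau_b^+$ to the post-$\tau_0^-$ event $\{\tau_{-Y}^-<\tau_0^+\}$ using the absence of positive jumps, (b) translating the problem by $+Y$ so that the two-sided exit identity (\ref{fi1}) applies with $a=Y$, and (c) recognising that the pre-$\tau_0^-$ factor is supplied by the Master's formula argument of Lemma 3. The extension of (\ref{fi1}) to a penalty depending on both the pre- and post-jump values is the one point that deserves explicit mention, but it is standard.
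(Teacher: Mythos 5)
Your proposal is correct and follows essentially the same route as the paper: strong Markov at $\tau_0^-$ under $\mathbf{n}$, the Master's-formula/Lemma 1 computation from Lemma 3 to produce the $\mathcal{H}^{(q)}(b,x)\,\Pi(\ud y)$ kernel, a spatial shift by $Y$, and then the two-sided exit identities of Theorem \ref{fi} (with the same monotone-class extension of (\ref{fi1}) to joint penalties that the paper uses implicitly). Your reading of the misprints ($\mathcal{W}^{(\lambda)}$ for $\mathcal{W}^{(q)}$, and in part (ii) the upper limit $b$, exponent $\Phi(q)(x+y)$, and denominator $W^{(q)}(Y)$) agrees with the definitions of $\mathcal{B}_f$, $\mathcal{U}_f$ and $\mathcal{E}$ in the theorem.
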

\begin{proof}We first  prove part (i). Observe that the strong Markov property at time $\tau_0^-$ under $N$ implies
\begin{align}\label{idenlemma}
&\mathbf{n}\left(e^{-q \tau_{-Y}^-}f(X_{\tau_{-Y}^--}, X_{\tau_{-Y}^-},)\mathbf{1}_{\{\tau_0^-< \tau_{-Y}^-<\zeta<\tau_b^+\}}\right)=\mathbf{n}\left(e^{-q \tau_0^-}{\mathcal{L} ( X_{\tau_0^-})}\mathbf{1}_{E_2}\right),
\end{align}
where $E_2$ is defined in (\ref{conj2}) and for $z<0$, 
\[
{\mathcal{L}(z)}=\e_{z}\left[e^{-q\tau_{-Y}^-}f( X_{\tau_{-Y}^--}, X_{\tau_{-Y}^-} )\mathbf{1}_{\left\{\tau_{-Y}^-<\tau_0^+\right\}}\mathbf{1}_{\left\{ z>-Y\right\}}\right] .
\]
Next, we use the  same arguments used in  the  proof of Lemma 3, i.e.  we apply the Master's formula,  under $\mathbf{n}$, and Lemma 1 to the right-hand side of (\ref{idenlemma}) to deduce
\begin{align}
&\mathbf{n}\left(e^{-q \tau_0^-}\mathcal{L} ( X_{\tau_0^-})\mathbf{1}_{E_2}\right)\notag\\
&\hspace{1cm}=\int_{0}^{b} \mathcal{H}^{(q)}(b,x)\int_{(-\infty,-x)}\e_{x+y}\left[\mathbf{1}_{\left\{x+y>-Y\right\}}e^{-q\tau_{-Y}^-}f( X_{\tau_{-Y}^--}, X_{\tau_{-Y}^-} )\mathbf{1}_{\left\{\tau_{-Y}^-<\tau_0^+\right\}}\right]\Pi(\ud y)\ud x.\notag
\end{align}
The result   follows from  the following identity which is consequence of Theorem 1, part (ii),
\begin{equation*}
\begin{split}
\e_{x+y}\left[e^{-\lambda\tau_{-a}^-}g(X_{\tau_{-a}^--}, X_{\tau_{-a}^-}) \mathbf{1}_{\left\{\tau_{-a}^-<\tau_0^+\right\}}\right] &=\frac{\sigma^{2}}{2}\mathcal{O}^{(\lambda)}(a,a+x+y)g(-a,-a)\\
&\hspace{-1cm}+\int_{0}^a\mathcal{W}^{(\lambda)}(a,a+x+y,z)\int_{(-\infty,-z)}g(z-a, z+w-a)\Pi(\ud w)\ud z.
\end{split}
\end{equation*}
Part (ii) follows from similar reasoning as above. More precisely,  we  apply the strong Markov property at time $\tau_0^-$ under $\mathbf{n}$. Thus, 
\begin{align*}
\mathbf{n}\left(e^{-q\zeta}\mathbf{1}_{\left\{\tau_0^-<\tau_{-Y}^-<\zeta<\tau_b^+\right\}}\right)=\mathbf{n}\left(e^{-q \tau_0^-}{\mathcal{L}_1 (X_{\tau_0^-})}\mathbf{1}_{E_2}\right),
\end{align*}
where  for $z<0$, 
\[
 \mathcal{L}_1(z)=\e_{z}\left[e^{-q\tau_{0}^+}\mathbf{1}_{\left\{\tau_{-Y}^-<\tau_0^+\right\}}\mathbf{1}_{\left\{ z>-Y\right\}}\right] .
\]
Again, using similar arguments as those used in  the  proof of Lemma 3, we get
\begin{align}
&\mathbf{n}\left(e^{-q \tau_0^-} \mathcal{L}_1(X_{\tau_0^-})\mathbf{1}_{E_2}\right)=\int_{0}^{b} \mathcal{H}^{(q)}(b,x)\int_{(-\infty,-x)}\e_{x+y}\left[e^{-q\tau_{0}^+}\mathbf{1}_{\left\{\tau_{-Y}^-<\tau_0^+\right\}}\mathbf{1}_{\left\{ x+y>-Y\right\}}\right]\Pi(\ud y)\ud x.\notag
\end{align}
The proof now follows from  (\ref{fi2}).
\end{proof}

In order to describe the event in (v), we need the following Lemma which is proved  with similar arguments as those used previously. In particular, we get functions $\mathcal{C}_f(q)$ and $\mathcal{F}(q)$.
\begin{lemma} For $q,b,\sigma>0$, it follows
\begin{align}
\hspace{-4cm}\mathbf{n}&\left(e^{-q\tau^{-}_{-Y}}f(X_{\tau^{-}_{-Y}-}, X_{\tau^{-}_{-Y}})\mathbf{1}_{\{\tau_0^-=0,\tau_{-Y}^-<\zeta\}}\right)=-\frac{\sigma^4}{4}\e\left[f(-Y,-Y)\frac{\partial}{\partial x}\mathcal{O}^{(q)}(Y,Y)\right]\notag\\
&\hspace{3cm}+\frac{\sigma^2}{2}\e\left[\int_0^Y\int_{(-\infty,-y)}f(y-Y,y+z-Y)\mathcal{O}^{(q)}(Y,Y-y)\Pi(\ud z)\ud y\right].\label{eqlem51}\\
\mathbf{n}\Big(e^{-q\zeta}&\mathbf{1}_{\{\tau_0^-=0,\tau_{-Y}^-<\zeta\}}\Big)=-\frac{\sigma^4}{4}\e\left[e^{-\Phi(q)Y}\frac{\partial}{\partial x}\mathcal{O}^{(q)}(Y,Y)\right]\notag\\
&\hspace{4cm}+\frac{\sigma^2}{2}\e\left[\int_0^Y\int_{(-\infty,-y)}e^{\Phi(q)(y+z-Y)}\mathcal{O}^{(q)}(Y,Y-y)\Pi(\ud z)\ud y\right].\label{eqlem52}
\end{align}
\end{lemma}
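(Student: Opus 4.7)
The plan is to prove (\ref{eqlem51}) directly from the description of the excursion measure $\mathbf{n}$ restricted to the creeping-downward event $\{\tau_0^- = 0\}$ given in \cite{cem}, and then obtain (\ref{eqlem52}) from (\ref{eqlem51}) by combining the strong Markov property under $\mathbf{n}$ at the time $\tau_{-Y}^-$ with the first-passage identity (\ref{fi2}). The key ingredient is that the event $\{\tau_0^- = 0\}$ has positive $\mathbf{n}$-measure only when $\sigma > 0$, and the description in \cite{cem} expresses $\mathbf{n}(\,\cdot\,;\tau_0^- = 0)$ as a limit of $\p_{-\epsilon}$-expectations as $\epsilon \downarrow 0$, with an explicit $\sigma^2/2$ factor arising from the Brownian part of $X$.

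For (\ref{eqlem51}), I would condition on $Y$ and apply this description to the functional
\[
F = e^{-q\tau_{-Y}^-}f(X_{\tau_{-Y}^--}, X_{\tau_{-Y}^-})\mathbf{1}_{\{\tau_{-Y}^- < \tau_0^+\}}.
\]
The $\p_{-\epsilon}$-expectation of $F$ can be evaluated directly by identity (\ref{fi1}) of Theorem \ref{fi}, applied to the shifted process $X + Y$ with $a = Y$, starting point $Y - \epsilon$, and payoff $g(u,v) = f(u - Y, v - Y)$, producing an expression of the form
\[
\tfrac{\sigma^2}{2}f(-Y,-Y)\mathcal{O}^{(q)}(Y, Y-\epsilon) + \int_0^Y \mathcal{W}^{(q)}(Y, Y-\epsilon, u)\int_{(-\infty,-u)} f(u-Y, u+w-Y)\,\Pi(\ud w)\,\ud u.
\]
At $\epsilon = 0$ both $\mathcal{O}^{(q)}(Y, Y)$ and $\mathcal{W}^{(q)}(Y, Y, u)$ vanish identically, so the answer comes from the first-order term in $\epsilon$. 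I would compute these first-order terms from the definitions, using in particular that $\partial_x \mathcal{W}^{(q)}(Y,x,u)|_{x=Y} = -\mathcal{O}^{(q)}(Y, Y-u)$, then multiply by $\sigma^2/2$ and integrate against the law of $Y$ to recover (\ref{eqlem51}). The $\sigma^4/4$ prefactor in the statement is precisely $(\sigma^2/2)^2$: one factor arises from the excursion-entrance description, the other from the Brownian contribution in (\ref{fi1}).

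The second identity (\ref{eqlem52}) should follow as a direct corollary. Applying the strong Markov property under $\mathbf{n}$ at $\tau_{-Y}^-$ and using $\e_z[e^{-q\tau_0^+}\mathbf{1}_{\{\tau_0^+ < \infty\}}] = e^{\Phi(q) z}$ from (\ref{fi2}) gives
\[
\mathbf{n}\bigl(e^{-q\zeta}\mathbf{1}_{\{\tau_0^-=0,\,\tau_{-Y}^-<\zeta\}}\bigr) = \mathbf{n}\bigl(e^{-q\tau_{-Y}^-}\,e^{\Phi(q)\,X_{\tau_{-Y}^-}}\mathbf{1}_{\{\tau_0^-=0,\,\tau_{-Y}^-<\zeta\}}\bigr),
\]
so the problem reduces to the special case of (\ref{eqlem51}) with $f(u,v) = e^{\Phi(q) v}$; a direct substitution matches (\ref{eqlem52}) term by term.

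The hard part will be pinning down, or correctly invoking, the precise entrance-law description of $\mathbf{n}$ on $\{\tau_0^- = 0\}$ from \cite{cem} so that the $\sigma^2/2$ factor is extracted without ambiguity; once that is in hand the remainder is scale-function calculus, modulo a mild justification for differentiating under the expectation over $Y$ (which can be handled by dominated convergence or a truncation argument on the support of $Y$).
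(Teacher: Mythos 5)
Your proposal is correct, and it arrives at both identities by the same underlying mechanism as the paper, but it packages the key step differently. The paper does not expand anything in $\epsilon$ itself: it invokes Theorem 3(i) of \cite{cem}, which identifies $\mathbf{n}(\cdot\,\mathbf{1}_{\{\tau_0^-=0\}})$ with $\tfrac{\sigma^2}{2}$ times the excursion measure $\bar{\mathbf{n}}$ of the process reflected at its supremum, and then simply quotes the two constituent formulas for $\bar{\mathbf{n}}\bigl(e^{-q\tau_Y^+}f(-X_{\tau_Y^+-},-X_{\tau_Y^+})\mathbf{1}_{\{X_{\tau_Y^+}=Y\}}\bigr)$ and $\bar{\mathbf{n}}\bigl(\cdots\mathbf{1}_{\{X_{\tau_Y^+}>Y\}}\bigr)$ from the computations around Theorem 3.10 of \cite{KKR}; the reduction of (\ref{eqlem52}) to (\ref{eqlem51}) via the Markov property at $\tau_{-Y}^-$ and (\ref{fi2}) is identical to yours. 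What you propose is precisely the derivation of those two quoted formulas from scratch: the entrance law of $\bar{\mathbf{n}}$ is $\lim_{\epsilon\downarrow 0}\epsilon^{-1}\e_{-\epsilon}[\,\cdot\,]$, so your first-order expansion of (\ref{fi1}) at $x=Y-\epsilon$, using $\partial_x\mathcal{W}^{(q)}(Y,x,u)\big|_{x=Y}=-\mathcal{O}^{(q)}(Y,Y-u)$ together with $\mathcal{O}^{(q)}(Y,Y)=\mathcal{W}^{(q)}(Y,Y,u)=0$, reproduces exactly the creeping term $-\tfrac{\sigma^2}{2}f(-Y,-Y)\partial_x\mathcal{O}^{(q)}(Y,Y)$ and the jump term, and the extra factor $\tfrac{\sigma^2}{2}$ from the relation between $\mathbf{n}$ and $\bar{\mathbf{n}}$ accounts for the asymmetric prefactors $\sigma^4/4$ versus $\sigma^2/2$ in the statement, as you correctly observe. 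Your route is therefore more self-contained (no appeal to \cite{KKR}) at the cost of having to justify the interchange of the $\epsilon\downarrow 0$ limit with the integrals against $\Pi$ and the law of $Y$, which you rightly flag; the paper's route buys brevity by outsourcing exactly that computation.
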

\begin{proof}  Let $\bar{\mathbf{n}}$ denote the excursion measure of the process reflected at its supremum. Then 
performing similar computations as those that appear in Theorem 3.10 in \cite{KKR} (see bottom of p. 138), we observe
\begin{align*}
\bar{\mathbf{n}}\Big(e^{-q\tau_{Y}^+}f(-X_{\tau_{Y}^+-},-X_{\tau_{Y}^+})\mathbf{1}_{\{X_{\tau_{Y}^+}=Y,\tau_{Y}^+<\zeta\}}\Big)=-
\frac{\sigma^2}{2}\mathbb{E}\left[f(-Y,-Y)\frac{\partial}{\partial x}\mathcal{O}^{(q)}(Y,Y)\right].
\end{align*}
On the other hand, we also can obtain the following identity (see top of p. 139 in \cite{KKR})
\[
\begin{split}
\bar{\mathbf{n}}\Big(e^{-q\tau_{Y}^+}f(-X_{\tau_{Y}^+-},&-X_{\tau_{Y}^+})\mathbf{1}_{\{X_{\tau_{Y}^+}>Y,\tau_{Y}^+<\zeta\}}\Big)\\
&=\mathbb{E}\left[\int_0^Y\int_{(-\infty,-y)}f(y-Y,y+z-Y)\mathcal{O}^{(q)}(Y,Y-y)\Pi(\ud z)\ud y\right]\notag.
\end{split}
\]
The identity in (\ref{eqlem51}) now follows from Theorem 3,  part (i), in  \cite{cem} which  implies the following relation between the excursion measure $\mathbf{n}$ and the excursion measure of the process reflected at its supremum $\bar{\mathbf{n}}$. For any $a>0$, we have
\begin{align}
\mathbf{n}\Big(e^{-q\tau_{-a}^-}f(X_{\tau_{-a}^--}&,X_{\tau_{-a}^-})\mathbf{1}_{\{  \tau_0^-=0, \ \tau_{-a}^-<\zeta\}}\Big)=\frac{\sigma}{2} \bar{\mathbf{n}}\Big(e^{-q\tau_{a}^+}f(-X_{\tau_{a}^+-},-X_{\tau_{a}^+})\mathbf{1}_{\{\tau_{a}^+<\zeta\}}\Big).\notag
\end{align}
In order to prove (\ref{eqlem52}), we use again the relation  between the excursion measures $\mathbf{n}$ and  $\bar{\mathbf{n}}$. Hence recalling that $Y$ is an independent r.v. of $X$,   using  the strong Markov property at time $\tau^-_{-Y}$ and identity (\ref{fi2}), we observe
\begin{align}
\mathbf{n}\Big(e^{-q\zeta}\mathbf{1}_{\{ \tau_0^-=0,\ \tau_{-Y}^-<\zeta\}}\Big)&=\mathbf{n}\Big(e^{-q\tau_{-Y}^-}\e_{-X_{\tau_{-Y}^-}}\Big[e^{-q\tau_{0}^+}\mathbf{1}_{\{\tau_{0}^+<\infty\}}\Big]\mathbf{1}_{\{ \tau_0^-=0,\ \tau_{-Y}^-<\zeta\}}\Big)\notag\\
&=\mathbf{n}\Big(e^{-q\tau_{-Y}^-}e^{\Phi(q)X_{\tau_{-Y}^-}}\mathbf{1}_{\{\tau_0^-=0,\ \tau_{-Y}^-<\zeta\}}\Big)\notag\\
&=\frac{\sigma^2}{2}\bar{\mathbf{n}}\Big(e^{-q\tau_{Y}^+}e^{-\Phi(q)X_{\tau_{Y}^+}}\mathbf{1}_{\{\tau_{Y}^+<\zeta\}}\Big)\notag.
\end{align}
The proof is completed once we follow the same steps as in the proof of (\ref{eqlem51}).

\end{proof}

\section{The case $x>0$.}
We finish the paper with an explanation of how to get the  Gerber-Shiu penalty function at time of bankruptcy $T_B$ for any  initial surplus $x\geq0$. We first consider the two possible  behaviours of   the first excursion away from $0$ under the restriction that it does not goes above level $b$: either bankruptcy occurs in the first excursion away from $0$, or  there is no bankruptcy and there is a new excursion starting from  $0$.  In the first scenario, we have three possibilities: (a) the process $X$ jumps at $\tau_0^-$ below the level   $-Y$, (b)  the process $X$ jumps at $\tau_0^-$ in the interval   $(-Y, 0)$ and then goes below the level $-Y$ continuously or by a jump or (c) the process $X$ hits $0$ continuously at $\tau_0^-$ after an independent exponential random time $\mathbf{e}_\lambda$.  In the second scenario, we have two possibilities: (d) either the process $X$ hits $0$ continuously at $\tau_0^-$ before the independent exponential random time $\mathbf{e}_\lambda$ and then a new excursion starts or (e) the process $X$ jumps at $\tau_0^-$ in the interval   $(-Y, 0)$ and  it returns to 0 without going below the level $-Y$.   Therefore using  the strong Markov property and the independence between the excursions, we obtain
\begin{align*}\label{gsx}
\phi_f(x,q,b)&=\e_x\left[e^{-q\tau_0^-}f(X_{\tau_0^- -}, X_{\tau_0^-})\mathbf{1}_{\left\{\tau_0^-<\tau_b^+, X_{\tau_0^-}<-Y\right\}}\right]+f(0,0)\e_x\left[e^{-q\tau_0^-}\mathbf{1}_{\left\{ X_{\tau_0^--}=0,\mathbf{e}_{\lambda}<\tau_0^-<\tau_b^+\right\}}\right]\notag\\
&\hspace{1cm}+\e_x\left[e^{-q\tau_0^-}\mathbf{1}_{\{\tau_0^-<\tau_b^+,  0>X_{\tau_0^-}>-Y\}}\e_{X_{\tau_0^-}}\left[e^{-q\tau_{-Y}^-}f(X_{\tau_{-Y}^-},X_{\tau_{-Y}^--})\mathbf{1}_{\left\{\tau_{-Y}^-<\tau_{0}^+\right\}}\right]\right]\notag\\
&\hspace{1cm}+\e_x\left[e^{-q\tau_0^-}\mathbf{1}_{\left\{ X_{\tau_0^--}=0,\tau_0^-<\tau_b^+\land \mathbf{e}_{\lambda}\right\}}\right]\phi_f(0,q,b)\\
&\hspace{1cm}+\e_x\left[e^{-q\tau_0^-}\mathbf{1}_{\left\{\tau_0^-<\tau_b^+,  0>X_{\tau_0^-}>-Y\right\}}\e_{X_{\tau_0^-}}\left[e^{-q\tau_{0}^+}\mathbf{1}_{\{\tau_{-Y}^->\tau_{0}^+\}}\right]\right]\phi_f(0,q,b).\notag\\
\end{align*}
Observe that the first term, which describes the case in (a),  follows from  identity (\ref{fi1}), i.e.
\begin{align*}
\e_x\left[e^{-q\tau_0^-}f(X_{\tau_0^- -}, X_{\tau_0^-})\mathbf{1}_{\left\{\tau_0^-<\tau_b^+, X_{\tau_0^-}<-Y\right\}}\right]=\mathbb{E}\left[\int_0^{b}\int_{(-\infty,-(y+Y))}f(y,y+u)\mathcal{W}^{(q)}(b, x, y)\Pi(\ud u )\ud y\right].
\end{align*}
The third term, which is described in (b), is precisely the same term that is computed in   (\ref{shitty3}), then
\begin{align*}
\e_x\left[e^{-q\tau_0^-}\mathbf{1}_{\{\tau_0^-<\tau_b^+, 0>X_{\tau_0^-}>-Y\}}\e_{X_{\tau_0^-}}\left[e^{-q\tau_{-Y}^-}f(X_{\tau_{-Y}^-},X_{\tau_{-Y}^--})\mathbf{1}_{\left\{\tau_{-Y}^-<\tau_{0}^+\right\}}\right]\right]={\mathcal{I}_f(x,q,b)},
\end{align*}
where 
\[
\begin{split}
&\mathcal{I}_f(x,q,b)\\
&\hspace{.6cm}=\e\bigg[\int_0^{b}\int_{-(y+Y)}^{-y}\int_0^Y\int_{-\infty}^{-v}f(v+w-Y,v-Y)\mathcal{W}^{(q)}(Y,y+u+Y,v)
\mathcal{W}^{(q)}(b,x,y)\Pi(\ud w)dv\Pi(\ud u)dy\bigg].\notag
\end{split}
\]
For the second and fourth terms, which are described in (c) and (d) respectively,   we use identity (\ref{fi1}) to deduce
\begin{align*}
\e_x\left[e^{-q\tau_0^-}\mathbf{1}_{\left\{ X_{\tau_0^--}=0,\mathbf{e}_{\lambda}<\tau_0^-<\tau_b^+\right\}}\right]&=\e_x\left[e^{-q\tau_0^-}(1-e^{-\lambda\tau_0^-})\mathbf{1}_{\left\{ X_{\tau_0^--}=0,\tau_0^-<\tau_b^+\right\}}\right]\\
&=\frac{\sigma^2}{2}\Big(\mathcal{O}^{(q)}(b,x)-\mathcal{O}^{(q+\lambda)}(b,x)\Big),
\end{align*}
and 
\begin{align*}
\e_x\left[e^{-q\tau_0^-}\mathbf{1}_{\left\{ X_{\tau_0^--}=0,\tau_0^-<\tau_b^+\land \mathbf{e}_{\lambda}\right\}}\right]=\frac{\sigma^2}{2}\mathcal{O}^{(q+\lambda)}(b,x).
\end{align*}
Finally for the fifth term, that corresponds to the case  (e), we  use Theorem \ref{fi} and get
\begin{align*}
\e_x&\left[e^{-q\tau_0^-}\mathbf{1}_{\{\tau_0^-<\tau_b^+, 0>X_{\tau_0^-}>-Y\}}\e_{X_{\tau_0^-}}\left[e^{-q\tau_{0}^+}\mathbf{1}_{\{\tau_{-Y}^->\tau_{0}^+\}}\right]\right]=\e_x\left(e^{-q\tau_0^-}1_{\{\tau_0^-<\tau_b^+, X_{\tau_0^-}>-Y\}}\frac{W^{(q)}(X_{\tau_0^-}+Y)}{W^{(q)}(Y)}\right)\notag\\
&\hspace{6.5cm}=\e\left[\int_0^{b}\int_{(-y-Y, -y)}\frac{W^{(q)}(y+u+Y)}{W^{(q)}(Y)}\mathcal{W}^{(q)}(b,x,y)\Pi(\ud u)\ud y\right].
\end{align*}
Therefore putting all these identities together give us 
\begin{align*}
\phi_f(x,q,b)&=\mathbb{E}\left[\int_0^{b}\int_{(-\infty,-(y+Y))}f(y,y+u)\mathcal{W}^{(q)}(b, x, y)\Pi(\ud u )\ud y\right]\\
&\hspace{1cm}+f(0,0)\frac{\sigma^2}{2}\Big(\mathcal{O}^{(q)}(b,x)-\mathcal{O}^{(q+\lambda)}(b,x)\Big)+{\mathcal{I}_f(x,q,b)}+ \phi_f(0,q,b)\frac{\sigma^2}{2}\mathcal{O}^{(q+\lambda)}(b,x)\\
&\hspace{1cm}+\e\left[\int_0^{b}\int_{(-y-Y, -y)}\frac{W^{(q)}(y+u+Y)}{W^{(q)}(Y)}\mathcal{W}^{(q)}(b,x,y)\Pi(\ud u)\ud y\right]\phi_f(0,q,b).\notag\\
\end{align*}
Similar results can be obtained for the case when $b$ goes to $\infty$, we leave the details to the interested reader.

\end{document}